\date{\today}
\newcommand\al{\alpha}
\newcommand\bb[1]{{\mathbb     #1}}
\newcommand\C[1]{{\mathcal #1 }}
\newcommand\fk[1]{\mathfrak{#1}}
\newcommand\ovl[1]{\overline{#1}}
\newcommand\CO{{\mathcal O}}
\newcommand\cCO{{\ovl{\mathcal O}}}
\newcommand\bC{{\mathbb C}}
\newcommand\bZ{{\mathbb Z}}
\newcommand\la{{\lambda}}
\newcommand\ep{{\epsilon}}
\newcommand\sig{{\sigma}}
\newcommand\fg{{\mathfrak g}}
\newcommand\ie{{i.e. ~}}
\newcommand\eg{{e.g. ~}}
\newtheorem{theorem}{Theorem}[section]
\newtheorem{corollary}[theorem]{Corollary}
\newtheorem{definition}[theorem]{Definition}
\newtheorem{example}[theorem]{Example}
\newtheorem{lemma}[theorem]{Lemma}
\newtheorem{proposition}[theorem]{Proposition}
\newtheorem{remark}[theorem]{Remark}
\newcommand\Ind{{\operatorname{Ind}}}
\begin{document}
\title{Admissible modules and normality of classical nilpotent orbits I}
\author{Dan Barbasch}
\author{Kayue Daniel Wong}

\address[Barbasch]{Department of Mathematics, Cornell University, Ithaca, NY 14853,
U.S.A.}
\email{barbasch@math.cornell.edu}

\address[Wong]{School of Science and Engineering, The Chinese University of Hong Kong, Shenzhen,
Guangdong 518172, P. R. China}
\email{kayue.wong@gmail.com}

\begin{abstract}
In the case of complex symplectic and orthogonal groups, we
find $(\fk g, K)-$modules with the property that their $K-$structure
matches the structure of regular functions on the closures of
nilpotent orbits. This establishes a version of the \textit{Orbit
  Method} of Kirrilov-Kostant-Souriau as proposed by Vogan. In the
process we give another proof of the classification of nilpotent
orbits with normal closure in the Lie algebra of a classical group
first established by Kraft-Procesi. 
\end{abstract}

\maketitle
\setcounter{tocdepth}{1}

\section{Introduction}
\label{sec:intro}

Normality of an algebraic variety is an essential property in
algebraic geometry. Much insight can be gained from the case of a
variety with a group action.  In their seminal work, Kraft and
Procesi, \cite{KP1} and \cite{KP2}, classify orbits with normal
closure for the action of a classical
group, $GL(n,\bb C)$, $Sp(2n,\bb C)$, $O(n,\bb C)$, on the variety of
nilpotent elements in the corresponding Lie algebra. All orbit
closures are normal in the case of $GL(n,\bb C)$ but not for the
other classical groups. We focus on the other classical groups from now on. 

Let $\CO$ be a nilpotent orbit, $\ovl{\CO}$ its closure,
and $R(\CO)$ and $R(\ovl{\CO})$ be the rings of regular functions. A
well known result states that
\begin{equation} \label{eq:normality}
R(\overline{\C O})\subseteq R(\C O),\ 
\text{and equality holds iff } \ovl{\C O}\ \text{is normal. }
\end{equation}
An important question (from our point of view at least) is what
functions on $R(\CO)$ extend to the closure, and if not, how far they
extend to orbits $\CO'\subset \ovl{\CO}$. In this paper and its
sequel, we provide an answer using the representation theory of
infinite dimensional representations of the
classical complex groups viewed as real Lie groups. The starting point
is the work of Brylinski \cite{Br} which associates to each nilpotent
orbit $\CO$ a $(\fk g,K)-$module (or more precisely a Dixmier algebra) $\C B(\ovl{\CO})$. This module
has a natural filtration such that the corresponding graded object is
$R(\ovl{\CO}).$  Being a $(\fk g_c, K_c)-$module (here the subscript $c$ denotes complexification), it decomposes with
finite multiplicity under $K_c\cong G$, where $G$ is viewed as a complex group. Under this identification,
$$
Gr(\C B(\ovl{\CO}))\cong R(\ovl{\CO}).
$$
The main tool is the $\Theta-$correspondence of dual reductive pairs
as defined by Howe, \cite{H}. Using the results in \cite{AB}, we
provide a complete composition series for $\C B(\ovl{\CO})$. As 
consequences, we obtain the following results, which in particular
answer the question posed earlier.
\begin{itemize}
\item There is an analogous Dixmier algebra $\C B(\CO)$ with associated
  graded object isomorphic to $R(\CO),$ endowed with an injective map
  $\C B(\ovl{\CO}) \hookrightarrow \C B(\CO).$
	We specify the
  composition factors of these modules explicitly.
\item We find explicit formulas for the $K_c-$types of the modules
  $\C B(\ovl{\CO})$ and $\C B(\CO).$
\item We obtain a new proof of the classification of normal closures
  of nilpotent orbits based on representation theory.
\item All the composition factors of $\C B(\ovl{\CO})$ and $\C B(\CO)$ are \textbf{unitary}. The modules $\C B(\CO)$ and
  $\C B(\ovl{\CO})$ fit into the general philosophy of the
  \textit{Orbit Method}, pioneered by Kostant, Kirillov and Souriau.    
\end{itemize}

Note that the modules $\C B(\ovl{\CO})$ and $\C B(\CO)$ are generally neither
irreducible nor unitary. Nevertheless, they do have filtrations by unitary
modules. This is different from the other known versions of the orbit
method. We view this as a feature of the lack of normality of
$\ovl{\CO}$.

\medskip
In the case of quantizing $R(\CO),$ one considers the more general case
$R(\CO,\psi)$ with $\psi$ an equivariant  local system. A version using the
$\Theta-$correspondence is analyzed in \cite{B5}. It is not
completely clear how to  generalize a local system $\psi$ of $\CO$ to $\ovl{\CO}$. The
variety introduced by Kraft-Procesi, and used in this paper, provides
a less singular  cover of $\ovl{\CO}$, which we call $\ovl{\CO_{ns}}.$
For this cover an analogue of $\psi$ makes sense. As
a consequence of the results in this paper, one obtains a quantization
of $R(\ovl{\CO_{ns}},\psi)$. 

\bigskip
Here are more details on the results of this paper.
In \cite{B5}, the first author constructed a unipotent 
representation $\Pi(\C O)$ for each classical nilpotent orbit $\C O$,
and proved that it is a quantization model of the orbit.
We `deform' some characters in the inducing module of $\Pi(\C O)$, and obtain a 
family of $(\fg_c, K_c)-$modules $\Pi_t(\C O)$ for $t \geq 0$ with $\Pi_0(\C O) = \Pi(\C O)$ (Definition \ref{def:deform}). Deformation of this kind was studied by Vogan \cite[Section 6]{V2}
for the only non-normal nilpotent variety $\overline{\mathcal{O}_{(4220)}}$ in $\mathfrak{sp}(8,\mathbb{C})$.


The first reducibility point of $\Pi_t(\C O)$ occurs at $t = 1$. Writing 
$\C B(\C O) := \Pi_{1}(\C O)$, the main theorem of the manuscript 
is the following:
\begin{theorem}[Theorem \ref{thm:main}]
Let $\C O$ be a classical nilpotnet orbit. There is an injective $(\fg_c,K_c)-$module morphism:
\begin{equation} \label{eq:CE}
\C E: \C B(\ovl{\C O}) \hookrightarrow \C B(\C O),
\end{equation}
whose image is the submodule of $\C B(\C O)$ generated by the
spherical vector. 
\end{theorem}
For a spherical module (a module containing the trivial $K-$type with
multiplicity one), we will call the submodule generated by the
spherical vector the ``\textit{cyclic submodule}''.
For $\ovl{\mathcal{O}_{(4220)}}$, the above theorem was conjectured in \cite[Section 6]{V2}. Therefore, our result verifies and generalizes Vogan's observation to all classical nilpotent orbits.

In view of \eqref{eq:normality}, one can determine the (non-)normality
of $\ovl{\C O}$ by checking whether \eqref{eq:CE} is a proper inclusion or not.
In Corollary \ref{cor:discrepancy}, we obtain the multiplicities of 
\emph{diminutive $K_c-$types} (Definition \ref{def:diminutive}) of $\C B(\ovl{\C O})$ and $\C B(\C O)$. This 
gives a new proof of the classification of normal nilpotent varieties
in the classical cases, 
and refines the theorem of \cite{KP2} by showing that the that whenever $\ovl{\C O}$ is not normal, the 
discrepancy between $R(\overline{\C O})$ and $R(\C O)$ occurs as early as diminutive $K_c-$types.

\smallskip
A list of candidates for the composition factors of $\C B(\ovl{\C O})$
and $\C B(\C O)$ are given in Proposition \ref{prop:parastructure}. The fact
that all the candidates have diminutive lowest $K_c-$types plays an
important role in the proof of Theorem 1.1.  
Since $\C B(\C O) = \Pi_{1}(\C O)$ occurs at the first reducibility point by
deforming the unitary representation  $\Pi(\C O)$, all its composition factors must be unitarizable. 
In a subsequent work, we will show which of these candidates appear in
the composition series of $\C B(\ovl{\C O})$ and $\C B(\C O)$. As
a result, the multiplicities of \emph{all} $K_c-$types in $R(\ovl{\C
  O})$ and $R(\C O)$ can be computed explicitly. Details will appear in a sequel to this paper.

\smallskip
On the other hand, the map $\C E$ in \eqref{eq:CE} can be seen as a `quantized' version of the inclusion \eqref{eq:normality}. It would be of interest to have a geometric interpretation of the quantized morphism $\C E$.
For example,
Losev  \cite{Lo} defined a `\emph{quantization parameter space}' $\mathfrak{P}$
for each nilpotent orbit $\C O$, so that for all $\mu \in \mathfrak{P}$, there is a filtered algebra $\mathcal{A}_{\mu}(\C O)$ equipped with a graded Poisson isomorphism satisfying $\mathrm{gr}(\mathcal{A}_{\mu}(\C O)) \cong R(\C O)$. 
Recently, \cite{LMM} proved that $\mathcal{A}_0(\C O) \cong \Pi(\C O)$ as $(\fg_c, K_c)-$modules for all classical nilpotent orbits.
More generally, there exists $\nu \in \mathfrak{P}$ such that $\Pi_{t}(\C O) \cong \mathcal{A}_{t\nu}(\C O)$ for all $t \geq 0$ (see Remark \ref{rmk:defquant}). It is therefore natural to ask about the relationship between the filtered Poisson algebra $\mathcal{A}_{\nu}(\C O)$ and the normality of $\ovl{\C O}$. We expect this will be helpful in studying certain non-normal exceptional nilpotent varieties with branching generic singularities classified in \cite{FJLS}.


\section{Preliminaries}
\label{sec:prelim}
\subsection{Langlands Parameters}

We recall the Langlands parametrization of irreducible
$(\fg_c, K_c)-$modules for a complex Lie group $G$ viewed as a real Lie
group. Fix a maximal compact subgroup $K$, and a pair $(B,H=TA)$
where $B$ is a real Borel subgroup and $H$ is a $\theta-$stable Cartan
subgroup such that $T=B\cap H$, and $A$ the complement stabilized by $\theta$.  

The {\it Langlands parameter} of any irreducible module is a pair $(\la_L;\la_R)$
 such that
  $\mu:=\la_L-\la_R$ is the parameter of a character of $T$ in the
  decomposition of the $\theta-$stable
  Cartan subalgebra $H=T\cdot A,$ and $\nu:=\la_L+\la_R$ the
  $A-$character. The \textit{principal series representation}
  associated to $(\la_L;\la_R)$ is the $(\mathfrak{g}_c, K_c)-$module
$$
X(\lambda_L,\lambda_R) =\Ind_B^G(e^{\mu}\otimes e^\nu \otimes
1)_{K-finite}.
$$
The symbol $\Ind$ refers to Harish-Chandra induction. 
The infinitesimal character, when $\fk g_c$ is identified with $\fk
g\times\fk g,$ is $(\la_L;\la_R).$ 
Since $e^{(\la_L;\la_R)}|_T = e^{\mu}$, 
$$
X(\lambda_L,\lambda_R)\mid_{K}=\Ind_T^K(e^{\mu}).
$$

Let $\ovl{X}(\la_L;\la_R)$ be the unique irreducible subquotient of
$X(\la_L;\la_R)$ containing  the $K_c-$type with extremal weight
$\mu=\la_L-\la_R.$ This is called the Langlands subquotient. 

\begin{proposition}[Parthasarathy-Rao-Varadarajan, Zhelobenko] \label{prop:BVprop}
Let $(\la_L;\la_R)$ and $(\la_L',\la_R') $ be parameters. The
following are equivalent:
\begin{itemize}
\item $(\la_L;\la_R)=(w\la_L';w\la_R')$ for some $w\in W.$ 
\item  $X(\la_L;\la_R)$ and $X(\lambda_L'; \lambda_R')$ have
  the same composition factors with same multiplicities. 
\item The Langlands subquotient of $X(\la_L;\la_R)$, written as 
${\ovl X}(\la_L;\la_R)$, is the same as that of $X (\lambda_L', \lambda_R')$.
\end{itemize}

Furthermore:
\begin{itemize}
\item  Every irreducible $(\mathfrak{g}_c, K_c)-$module is equivalent
  to some ${\ovl X}(\la_L;\la_R)$.
\item When $Re\ \nu$ is dominant (anti-dominant) with respect to the roots in $B,$
$X(\la_L;\la_R)$ has a unique irreducible quotient (submodule) identical to
$\ovl{X}(\la_L;\la_R)$; it is the image of the long intertwining operator
  given by integration (see \cite{Kn} for details).
\end{itemize}
\end{proposition}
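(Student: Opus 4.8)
The plan is to read this proposition as the Langlands classification specialized to a complex group $G$ regarded as a real group, and to exploit the crucial simplification that such a $G$ possesses only one conjugacy class of Cartan subgroups, so that every standard module is literally a principal series $X(\lambda_L,\lambda_R)=\Ind_B^G(e^\mu\otimes e^\nu\otimes 1)$ induced from the Borel. I would prove the equivalence of the three bulleted conditions by establishing the cycle (first)$\Rightarrow$(second)$\Rightarrow$(third)$\Rightarrow$(first), and then dispose of the two concluding assertions separately.

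For (first)$\Rightarrow$(second), the natural tool is the Harish--Chandra distribution character $\Theta_{X(\lambda_L,\lambda_R)}$. Because $H$ is the unique Cartan up to conjugacy, this character is supported on $\bigcup_g gHg^{-1}$, and on $H^{\mathrm{reg}}$ it equals a $W$-symmetrization $\sum_{w\in W}(e^\mu\otimes e^\nu)^w$ of the inducing character divided by a fixed, positive Weyl denominator (the product of the holomorphic and antiholomorphic Weyl denominators, $|\prod_{\alpha>0}(1-e^{-\alpha})|^2$); in particular it is unchanged when $e^\mu\otimes e^\nu$ is replaced by any $N_G(H)$-conjugate, \ie when $(\lambda_L,\lambda_R)$ is replaced by $(w\lambda_L,w\lambda_R)$. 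Since distinct virtual characters are linearly independent, $X(\lambda_L,\lambda_R)$ and $X(w\lambda_L,w\lambda_R)$ then have the same composition factors with the same multiplicities. (An alternative is to use the standard long intertwining operator, a nonzero $G$-map between the two standard modules, and deform to a generic parameter where it is an isomorphism, the class in the Grothendieck group being locally constant along the family; but the character computation is cleaner and needs nothing about convergence.)

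The implication (second)$\Rightarrow$(third) is then short. By definition $\ovl{X}(\lambda_L,\lambda_R)$ is the composition factor of $X(\lambda_L,\lambda_R)$ containing the $K$-type $\delta$ of extremal weight $\mu=\lambda_L-\lambda_R$; by Frobenius reciprocity the multiplicity of $\delta$ in $X(\lambda_L,\lambda_R)\mid_K=\Ind_T^K(e^\mu)$ equals the dimension of the $\mu$-weight space of $\delta$, which is $1$ since $\mu$ is extremal, so this composition factor is unique and $\ovl{X}(\lambda_L,\lambda_R)$ is well defined. As $\mu'=\lambda_L'-\lambda_R'=w\mu$ is $W$-conjugate to $\mu$, the parameters $(\lambda_L,\lambda_R)$ and $(\lambda_L',\lambda_R')$ pick out the same $K$-type $\delta$, and hence --- given that their standard modules have identical composition-factor multisets --- the same Langlands subquotient. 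For (third)$\Rightarrow$(first), if $\ovl{X}(\lambda_L,\lambda_R)\cong\ovl{X}(\lambda_L',\lambda_R')$ then the two modules share an infinitesimal character, so $\lambda_L\sim_W\lambda_L'$ and $\lambda_R\sim_W\lambda_R'$ (\emph{a priori} by different Weyl elements), and they share a lowest $K$-type, so $\mu\sim_W\mu'$. Upgrading this to a single \emph{diagonal} $w$ with $(\lambda_L,\lambda_R)=(w\lambda_L',w\lambda_R')$ is the step I expect to be the main obstacle: I would treat it either by a direct Weyl-combinatorial argument (after acting by the diagonal $W$ one may assume $\lambda_L$ dominant, and then knowing $W(\lambda_L-\lambda_R)$ together with the $W$-orbit of $\lambda_R$ forces the position of $\lambda_R$ inside its orbit), or by invoking Zhelobenko's explicit description of the irreducible $(\fg_c,K_c)$-modules of a complex group, which lets one read the parameter off the module.

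It remains to justify the two final bullets. For exhaustiveness, an irreducible $(\fg_c,K_c)$-module $\pi$ has, by Harish--Chandra, an infinitesimal character, recorded as a pair $(\lambda_L,\lambda_R)$; by Casselman's subrepresentation theorem $\pi$ embeds in a principal series induced from the Borel, which must be some $X(\lambda_L',\lambda_R')$ with $(\lambda_L',\lambda_R')$ in the same $W\times W$-orbit, and comparing lowest $K$-types --- or, after conjugating so that $\mathrm{Re}\,\nu$ is dominant, using the uniqueness of the irreducible quotient --- identifies $\pi$ with $\ovl{X}(\lambda_L,\lambda_R)$ for a suitable representative. Finally, when $\mathrm{Re}\,\nu$ is dominant for the roots of $B$, the long intertwining operator $X(\lambda_L,\lambda_R)\to X(w_0\lambda_L,w_0\lambda_R)$ is defined by a convergent integral; this is precisely the setting of Langlands' quotient theorem, so its image is the unique irreducible quotient of $X(\lambda_L,\lambda_R)$ and coincides with $\ovl{X}(\lambda_L,\lambda_R)$, exactly as in \cite{Kn}.
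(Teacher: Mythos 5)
The paper does not actually prove this proposition: it is stated as a classical result (Parthasarathy--Rao--Varadarajan, Zhelobenko) with a pointer to \cite{Kn}, and there is no \verb|proof| environment attached. So there is no ``paper's route'' to compare against; what follows is an assessment of your sketch on its own terms.

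Your direction (first)$\Rightarrow$(second) via the Harish--Chandra character of the principal series is correct and standard, and the two final bullets (exhaustion via Casselman's subrepresentation theorem plus lowest $K$-type, and the Langlands quotient as image of the long intertwining integral when $\mathrm{Re}\,\nu$ is dominant) are also fine. In (second)$\Rightarrow$(third), however, you assert ``$\mu'=\lambda_L'-\lambda_R'=w\mu$'' as if it were given, but under hypothesis (second) that is not an assumption --- you must first deduce $\mu\sim_W\mu'$ from the equality of $K$-spectra $\Ind_T^K(e^\mu)=\Ind_T^K(e^{\mu'})$, which does follow from the composition series being equal. This is an easy repair, but as written the step borrows the conclusion of (first).

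The genuine gap is in your primary (``Weyl-combinatorial'') argument for (third)$\Rightarrow$(first). The claim that, after normalizing $\lambda_L$ to be dominant, the $W$-orbit of $\mu=\lambda_L-\lambda_R$ together with the $W$-orbit of $\lambda_R$ pins down $\lambda_R$ up to $\mathrm{Stab}_W(\lambda_L)$ is false. Take $W=S_3$ acting on $\mathbb{R}^3$ and $\lambda_L=(2,1,0)$ (regular dominant). The choices $\lambda_R=(1,2,0)$ and $\lambda_R'=(2,0,1)$ lie in the same $W$-orbit, and $\mu=(1,-1,0)$ and $\mu'=(0,1,-1)$ also lie in the same $W$-orbit (both dominantize to $(1,0,-1)$), yet $(\lambda_L,\lambda_R)$ and $(\lambda_L,\lambda_R')$ are not diagonally $W$-conjugate, since $\lambda_L$ is regular. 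So infinitesimal character together with lowest $K$-type do \emph{not} determine the diagonal $W$-orbit of the parameter, and the two Langlands subquotients $\ovl{X}(\lambda_L,\lambda_R)$ and $\ovl{X}(\lambda_L,\lambda_R')$ are in fact non-isomorphic even though they agree on all the invariants your argument tracks. To close (third)$\Rightarrow$(first) one must use the full Langlands data: either compare the $A$-parameter $\nu$ (which differs in the example above, $(3,3,0)$ versus $(4,1,1)$) through the uniqueness statement in the Langlands quotient theorem, or appeal directly to the Zhelobenko/Duflo classification --- but the latter essentially \emph{is} the proposition, so as a proof it would be circular.
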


The case of the orthogonal group is dealt with via Clifford theory. We use Weyl's parametrization of the
finite dimensional representations of the orthogonal groups (see page 6 of \cite{AB}). The highest weight of a $K_c-$type 
will be denoted 
\begin{equation} \label{eq:weylo}
\mu = (a_1 \geq \dots \geq a_n | \pm 1)\end{equation}
whenever $a_n = 0$, and the Langlands quotients will acquire a $\pm 1$ whenever the
corresponding lowest $K_c-$types are in different irreducible quotients.

\medskip
If we need to specify the group, the standard module and Langlands
quotient will acquire a subscript, \eg $X_G(\la_L;\la_R)$ or $\ovl{X}_G(\la_L;\la_R).$ 
Also, we only deal with {\it spherical} parameters 
$\la_L = \la_R = \la \in \mathfrak{h}^*$ for most parts of this paper. In such cases, we will write 
\begin{itemize}
\item $X_G(\la) := X_G(\la; \la) $\quad (or $X_G(\la; \la | +1)$ for orthogonal groups); and
\item $\ovl{X}_G(\la) := \ovl{X}_G(\la;\la)$\quad (or $\ovl{X}_G(\la; \la | +1)$ for orthogonal groups). 
\end{itemize}

\begin{definition} \label{def:diminutive}
Let $G$ be a classical complex Lie group, and $V_{\mu}$ be the irreducible, finite-dimensional $K_c-$type with highest weight $\mu$ (or Weyl's parametrization \eqref{eq:weylo} for orthogonal groups). The
{\bf diminutive $K_c-$types} of $G$ are:
$$\begin{cases}
V_{(1^{k},0^{n-2k},-1^{k})}\ \  ( 0 \leq k \leq \lfloor \frac{n}{2} \rfloor) & \text{for}\ G = GL(n,\mathbb{C}) \\
V_{(1^{k},0^{n-k}|\ (-1)^k)}\ \ ( 0 \leq k \leq n)  & \text{for}\ G = O(2n+1,\mathbb{C})\\
V_{(1^{2k},0^{n-2k})}\ \ ( 0 \leq k \leq \lfloor \frac{n}{2} \rfloor) & \text{for}\ G = Sp(2n,\mathbb{C}) \\
V_{(1^{2k},0^{n-2k}|\ \pm 1)}\ \ ( 0 \leq k \leq \lfloor \frac{n}{2} \rfloor) \  & \text{for}\ G = O(2n,\mathbb{C})
\end{cases}.$$ 
For instance, the diminutive $K_c-$types of $Sp(2n,\mathbb{C})$ are of the form $\wedge^{2\ell}\mathbb{C}^{2n}/\wedge^{2\ell-2}\mathbb{C}^{2n}$,
and the diminutive $K_c-$types of $O(2n+\delta,\mathbb{C})$ ($\delta \in \{0,1\}$) are of the form $\wedge^{2\ell} \mathbb{C}^{2n+\delta}$ for some
positive integer $\ell$.

\smallskip
Let $\gamma: X \to Y$ be a $(\fg_c, K_c)-$morphism. We write 
$$\gamma: X \overset{dm}{\longrightarrow} Y$$ 
if $\gamma$ is an isomorphism when restricted to diminutive
  $K_c-$types.
\end{definition}

As in \cite{B4}, we will use the strings to denote the parameters $\la$:
\begin{definition}
Let $a, A \in \frac{1}{2}\mathbb{Z}$ be such that $A - a \in \mathbb{Z}$. A {\bf string} is an ascending sequence of numbers
$$(a \dots A) := (a,a+1,\dots,A-1,A)$$ 
(if $A - a < 0$ then the string is empty). 

The {\bf dual string} of $(a \dots A)$ is defined as
$$(a \dots A)^* := (-A \ldots -a)= (-A,-A+1,\dots,-a-1,-a),$$
so that the dual (or contragredient) representation of $\ovl{X}_{GL}((a \dots A))$ is $\ovl{X}_{GL}((a \dots A))^* = \ovl{X}_{GL}((a \dots A)^*)$.
\end{definition}

We also introduce some shorthand notation for induced modules:
\begin{definition} \label{def:ind}
 Let $G'$ be the Lie group with the same
type as $G$ of lower rank, and $\Psi$ be a representation of 
$G'$. We write
\[I^G\left(
  \la \boxtimes\cdots \boxtimes \la' \boxtimes\Psi\right) :=
\mathrm{Ind}_{GL \times \cdots \times GL \times G'}^{G}\left(\ovl{X}_{GL}(\la)
\boxtimes \cdots \boxtimes \ovl{X}_{GL}(
  \la') \boxtimes \Psi\right).
\]
Similarly, we write 
\[
I^G\left(
  \la \boxtimes\cdots \boxtimes\la' \right) :=
\mathrm{Ind}_{GL \times \cdots \times GL}^{G}\left(\ovl{X}_{GL}(\la) 
\boxtimes \cdots \boxtimes \ovl{X}_{GL}(\la')\right).
\]
\end{definition}

\subsection{An Intertwining Operator}
Let $(a \dots A)$, $(b \dots B)$ be two strings. In this manuscript, we make extensive use of the intertwining operator
\begin{equation} \label{eq:iota}
\iota: I^{GL}\left((a \dots A)\boxtimes(b \dots B)\right) \longrightarrow 
	I^{GL}\left((b \dots B)\boxtimes(a \dots A)\right),
\end{equation}
where $\iota$ is normalized so that it is identity on the trivial $K_c-$type. Following \cite[Section 2]{B4}, 
we say that the strings are \textbf{nested}, if one of the following conditions hold:
\begin{itemize}
\item[(i)] $a-b \notin \mathbb{Z}_{>0}$; or
\item[(ii)] $b \leq a \leq A \leq B$; or
\item[(iii)] $a-b \in \mathbb{Z}_{>0}$ and $B+1 < a$.
\end{itemize}
For example, the strings 
\begin{align*}
&& && && && (a,&& \dots,&& B&, && B+1, && B+2, && \dots, && A)\\
(b, && \dots,&& a-2, && a-1, &&a, && \dots, && B&)
\end{align*}
and
\begin{align*}
&(a = B+1,\ \dots,\ A)\\
(b,\ \dots,\ B&)
\end{align*}
are {\bf not} nested.
\begin{proposition} \label{p:glgl}
Consider the intertwining operator $\iota$ in \eqref{eq:iota}.
\begin{itemize}
\item[(a)] If the parameter is nested, then $\iota$ is an isomorphism
on the level of diminutive $K_c-$types, i.e. 
$$I^{GL}\left((a \dots A)\boxtimes(b \dots B)\right) \overset{dm}{\longrightarrow}
	I^{GL}\left((b \dots B)\boxtimes(a \dots A)\right).$$
	Moreover, if the strings satisfy (i) or (ii) above, then both modules are irreducible, and the map is an
	isomorphism on all $K_c-$types.
\item[(b)] If $a-b \in \mathbb{Z}_{>0}$ and $a \leq  B+1 \leq A$,
then the kernel of $\iota$ is the irreducible module with parameter
\[
\begin{pmatrix}
a-1&\dots &B  &a&\dots &B+1&B+2&\dots &A&b&\dots &a-2\\
a  &\dots &B+1&a-1&\dots &B&B+2&\dots &A&b&\dots &a-2 
\end{pmatrix}.
\]
In this case, the image of $\iota$ is equal to
\[
I^{GL}\left((b \dots A)\boxtimes(a \dots B)\right)
\]
\end{itemize}
\end{proposition}
\begin{proof} Detailed calculations for intertwining operators an diminutive $K_c-$types can be found in \cite{B4}; they exploit the relations between diminutive $K_c-$types and Weyl group representations. The composition factors of modules of $GL(m+n)$ induced from characters on $GL(m)\times GL(n)$ can be obtained from the techniques employed in \cite{BSS}. We omit further details.
\end{proof}

\subsection{Nilpotent Orbits}\label{sec:nilpotent}
A classical nilpotent orbit $\CO$ is denoted by its 
Young diagram. We specify $\C O$ by the
{\it columns} of its Young diagram. 
This parametrization is already employed in Kraft and Procesi \cite{KP2}. 
Our results are best phrased in these terms as well.

\smallskip
We describe the nilpotent orbits for symplectic groups $G = Sp(2n,\mathbb{C})$ first.
Denote the columns of the Young diagram by 
\begin{equation} \label{eq:corbits}
(c_0 \geq c_1 \geq \dots \geq c_{2p} \geq c_{2p+1})\quad (\text{set}\ c_{2p+1}=0\ \text{if necessary})
\end{equation}
with $\sum_i c_i = 2n$. The condition that $\CO$ is a symplectic orbit translates into
$c_{2i} + c_{2i+1}$ is even. A {\it special}
nilpotent orbit in the sense of \cite{Lu} satisfies the additional
condition that if $c_{2i}$ is odd, then $c_{2i} = c_{2i+1}$. 

\smallskip
There are similar characterizations of nilpotent orbits for orthogonal groups $G = O(2n+\delta,\mathbb{C})$ ($\delta = 0$ or $1$).
Namely, suppose the columns of the Young diagram 
are given by 
\begin{equation} \label{eq:bdorbits}
(c_1 \geq c_2 \geq \dots \geq c_{2p} \geq c_{2p+1})\quad (\text{set}\ c_{2p+1}=0\ \text{if necessary})
\end{equation}
with $\sum_i c_i = 2n + \delta$. Then it corresponds to an orthogonal nilpotent orbit
iff $c_{2i} + c_{2i+1}$ is even for $1 \leq i \leq p$. Moreover, the special
nilpotent orbits satisfies the additional
condition that if $c_{2i} \equiv \delta + 1\ (\mathrm{mod}\ 2)$, then $c_{2i} = c_{2i+1}$.

\medskip
From now on, we will specify any classical nilpotent orbit $\C O$ by the column sizes of its corresponding
Young diagram given by \eqref{eq:corbits} or \eqref{eq:bdorbits}.

\begin{definition}
We call $\C O$ {\bf odd} or {\bf even} if its column sizes $c_i$ are all odd or all even respectively.
\end{definition}

We recall the necessary and sufficient conditions for a classical nilpotent variety 
orbit $\overline{\C O}$ to be non-normal:
\begin{theorem}[\cite{KP2}] \label{thm:KP}
A classical nilpotent orbit $\C O$ of the form \eqref{eq:corbits} or \eqref{eq:bdorbits} has non-normal
closure if and only if there exists $1 \leq i \leq j \leq k$ such that
\[
c_{2i-2} > c_{2i-1} =
c_{2i} = \dots = c_{2j-1} =
c_{2j}  > c_{2j+1}.
\]
(omit the condition on $c_{2i-2}$ in orthogonal groups when $i=1$). For instance, the first example for a symplectic orbit having non-normal closure is $\C O = (4 > 2 =2 > 0)$
in $\mathfrak{sp}(8,\mathbb{C})$ as mentioned in the introduction.
\end{theorem}

\begin{proof} This is a rephrasing of the main result in \cite{KP2}.
Let $\C O$ be of the form
$$
\C O = (\cdots c_{2i-2} > c_{2i-1} =c_{2i} = \dots = c_{2j-1}=c_{2j} > c_{2j+1} \cdots).
$$
Then the orbit $\C O'$ with 
$$
\C O^{\flat} = (\cdots c_{2i-2} \geq c_{2i-1}+2 > c_{2i} = \dots = c_{2j-1} > c_{2j}-2 \geq c_{2j+1} \cdots)
$$
occurs in the closure of $\C O.$ Table I of
\cite{KP2} states that $\C O^{\flat}$ is a minimal $\epsilon-$degeneration of type
$A_{2n-1} \cup A_{2n-1}$ with $n = 2(i-j+1)$.  By Theorem 2 of \cite{KP2}, this is the only
type of non-normal singularity that can occur in $\ovl{\C O}$. 
\end{proof}

\begin{example}
The closure of $\C O = (8,8,6,6,6,4,4,2)$ in $\mathfrak{sp}(44,\mathbb{C})$ is not normal, since
$
c_4=6>c_5=4=c_6=4>c_7=2.
$  

In fact, under the setting of Definition 3.3(a) of \cite{KP2} and
the main results therein, the singularity type of the orbit 
$$\C O^{\flat} = (8,8,6,6,6,6,2,2) \subset \ovl{\C O} = \ovl{(8,8,6,6,6,4,4,2)}$$
is equivalent to that of
$$(6,2,2) \subset \ovl{(4,4,2)}$$ 
in $\mathfrak{o}(10,\bb{C})$
by removing the common columns of $\C O^{\flat}$ and $\C O$ on the left,
which in turn is equivalent to that of
$$(4) \subset \ovl{(2,2)}$$ 
in $\mathfrak{o}(4,\mathbb{C})$
by removing the first two common rows. This
becomes a minimal irreducible $\epsilon-$degeneration by
Definition 3.3(b) of \cite{KP2}, whose singularity is of type $A_1 \cup A_1$
from Table I of \cite{KP2}, which is not normal.
\end{example}

\subsection{Unipotent Representations} \label{sec:unip}
In this section, we recall the content of \cite{B5} for a quantization model $\Pi(\C O)$
of all classical orbits $\C O$.
\begin{definition} \label{def:roprime}
Let $\C O$ be a classical nilpotent orbit of the form
\begin{equation} \label{eq:o}
\C O = \begin{cases} 
(c_0 \geq c_1 \geq \dots \geq c_{2p+1}) & \text{if}\ G = Sp(2n,\mathbb{C}), \\
(c_1 \geq c_2 \geq \dots \geq c_{2p+1}) & \text{if}\ G = O(2n+\delta,\mathbb{C})
\end{cases},
\end{equation}
and let $\tau(\C O) := \{i \ |\ c_{2i-1} = c_{2i}\}$. Define $\C O'$ by
\begin{equation} \label{eq:oprime}
\C O' := \begin{cases} (c_0' \geq c_1' \geq \dots \geq c_{2q+1}') & \text{if}\ G = Sp(2n,\mathbb{C}), \\
(c_1' \geq c_2' \geq \dots \geq c_{2q+1}')& \text{if}\ G = O(2n+\delta,\mathbb{C})
\end{cases},
\end{equation}
where $\C O'$ is obtained from $\C O$ by removing all $c_{2i-1}, c_{2i}$ for $i \in \tau(\C O)$ 
in $\C O$. 
Then $\C O'$ is a nilpotent orbit in $\mathfrak{g}'$ of the same type as $\fg$ but of smaller rank. 
\end{definition}

\begin{example}
Consider $\C O = (8,8,6,{\bf 6,6,4,4},2)$ in $\mathfrak{sp}(44,\mathbb{C})$ as in the previous example. Then $\tau(\C O) = \{2,3\}$
and $\C O' = (8,8,6,2)$.
\end{example}

Using the notations in Definition \ref{def:roprime}, the {\bf spherical unipotent representation} attached to a classical nilpotent orbit $\C O$ is
\begin{equation} \label{eq:sunip}
\Pi(\C O) := \mathrm{Ind}_{\prod_{i \in \tau(\C O)} GL(c_{2i}) \times G'}^{G}\left(\mathrm{triv} \boxtimes \dots \boxtimes \mathrm{triv} \boxtimes
\mathcal{U}(\mathcal{O}')\right),
\end{equation}
where
$$\mathcal{U}(\mathcal{O}') := \begin{cases}
\overline{X}_{G'}\left(\la[c_0',c_1'], \dots,  \la[c_{2q}',c_{2q+1}']\right) &\text{if}\ G' = Sp(2n',\mathbb{C})\\
\overline{X}_{G'}\left(\la[-\delta,c_1'], \la[c_2',c_3'], \dots,  \la[c_{2q}',c_{2q+1}']\right) &\text{if}\ G' = O(2n'+\delta,\mathbb{C})\end{cases}$$
and $\la[x,y]$ is given by
\begin{equation} \label{eq:laxy}
\la[x,y] := \left(-\frac{y}{2} +1 \ldots \frac{x}{2}\right)
\end{equation}
for $x, y \in \mathbb{Z}_{\geq -1}$.

In \cite{B1} and \cite{B5}, it was shown that
$\Pi(\C O)$ is an irreducible, unitary $(\fg, K)-$module, whose $K_c \cong G-$spectrum satisfies $\Pi(\C O)|_{K_c} \cong R(\C O)$.
In order to relate $\Pi(\C O)$ with the Brylinski model $\C B(\ovl{\C O})$, one needs to `deform' $\Pi(\C O)$ as follows:
\begin{definition} \label{def:deform}
Retain the notations above. For $t \geq 0$, define
$$\Pi_{t}(\C O) := \mathrm{Ind}_{\prod_{i \in \tau(\C O)} GL(c_{2i}) \times G'}^{G}\left(|\det|^{t} \boxtimes \dots \boxtimes |\det|^{t} \boxtimes \mathcal{U}(\mathcal{O}')\right),$$
and $$\C B(\C O) := \Pi_{1}(\C O)$$
(if $\tau(\C O)$ is empty set and $\C O = \C O'$, then $\C B(\C O) = \C U(\C O)$).
\end{definition}
Obviously, when $t = 0$, $\Pi_{0}(\C O) = \Pi(\C O)$. Also note that $\Pi_{t}(\C O)$ has the same $K_c-$spectrum
for all $t \geq 0$. 
 
\begin{remark} \label{rmk:defquant}
We now explain the relation between $\Pi_{t}(\C O)$ and deformation quantization. For all nilpotent orbits $\C O$, Losev in \cite{Lo} 
constructed a quantization parameter space $\mathfrak{P}$ such that for each $\mu \in \mathfrak{P}$,
there exists a filtered quantization $\mathcal{A}_{\mu}(\C O)$ satisfying $\mathrm{gr}(\mathcal{A}_{\mu}(\C O)) \cong R(\C O)$ 
(see \cite[Section 4]{LMM} for more details). In the special case when $\C O$ is a classical nilpotent orbit, it is proved in Section 10 of \cite{LMM} 
that there exists a $\nu \in \mathfrak{P}$ such that for all $t \geq 0$, we have a $(\fg_c, K_c)-$module isomorphism
$\Pi_{t}(\C O) \cong \mathcal{A}_{t\nu}(\C O).$
\end{remark}

\section{Dual Pairs and the Kraft-Procesi Model}
\label{sec:indprinciple}
\subsection{The Kraft-Procesi Model $\mathcal{B}(\ovl{\C O})$}
The Kraft-Procesi model, introduced in Definition 6.1
in \cite{Br}, is an admissible
$(\fk g_c, K_c)-$module whose $K_c-$structure matches $R(\ovl{\C O}).$

Given a classical nilpotent orbit $\C O$ as in \eqref{eq:o}, let $G_{2i} = Sp(c_{2i}+\dots +c_{2p+1},\bC)$ and $G_{2i+1}=O(c_{2i+1}+\dots
+c_{2p+1},\bC).$  Let $\fk g_j$ be the corresponding Lie algebras, and
$ K_j$ the maximal compact subgroups. Then 
\begin{itemize}
\item $G = G_{\alpha}$, where $\alpha = 0$ if $G$ is symplectic and $\alpha =1$ if $G$ is orthogonal.
\item $G_j\times G_{j+1}$ is a reductive dual pair, and let $\Omega_{j+1}$ be the oscillator representation as in \cite{AB}. 
\end{itemize}

\begin{definition}[Kraft-Procesi Model]\label{def:kp}
Let $\C O$ be a classical nilpotent orbit given in \eqref{eq:o}. Using the notations above, define a $((\fk g_{\alpha})_c,(K_{\alpha})_c) = (\fk g_c,K_c)-$module $\Omega(\ovl{\C O})$ by:
\[
\Omega(\ovl{\C O}) := \Omega_{\alpha+1} \otimes \Omega_{\alpha+2} \otimes \dots \otimes \Omega_{2p+1} / (\fg_{\alpha+1} \times \dots \times \fg_{2p+1})(\Omega_{\alpha+1} \otimes \dots \otimes \Omega_{2p+1}).
\]
\end{definition}
Let $K^1:=K_{\alpha+1} \times \ldots \times K_{2p},$ and $(K^1)^0$ be the connected component of the identity. 
Then $(K^1)^0$ acts trivially on $\Omega(\cCO).$ So $K^1/(K^1)^0$
acts on $\Omega(\cCO).$ 
\begin{theorem}[\cite{Br}, Theorem 6.3]\label{t:br}
Let $\C B(\cCO) := \Omega(\cCO)^{K^1/(K^1)^0}$, then
$$
\C B(\cCO)\mid_{K_c} \cong R(\cCO)\mid_{G},
$$ 
where on the right hand side the complex group $G$ is
  identified with $K_c.$
\end{theorem}

\subsection{The Induction Principle for Dual Pairs}
Since the definition of $\mathcal{B}(\ovl{\C O})$ involves oscillator representations, we first recall some results in \cite{AB} on the dual pair correspondence:
\begin{theorem}[\cite{AB}, Corollary 3.21] \label{thm:abinduction}
Let $H_1\times H_2$ be a reductive dual pair, and
$P=MN=P_1\times P_2=M_1N_1\times M_2N_2\subset H_1\times H_2$ a real
parabolic subgroup:
\begin{equation*}
  \label{eq:dpairosp}
  \begin{aligned}
    &H_1=O(2m+\tau), &&M_1=GL(k)\times O(2m+\tau-2k);\\
    &H_2=Sp(2n),     &&M_2=GL(\ell)\times Sp(2n-2\ell);
  \end{aligned}
\end{equation*}
with $\tau =0,1.$ Suppose there is a non-trivial $M-$equivariant map
$\Omega_M\longrightarrow\sig_1\boxtimes\sig_2$. Then there is a
non-trivial map
\begin{equation*}
  \label{eq:indpr}
  \begin{aligned}
  \Omega\longrightarrow \Ind_{P_1}^{H_1}\left(\al_1\sig_1\right)&\boxtimes\Ind_{P_2}^{H_2}\left(\al_2\sig_2\right), \text{ where}\\
  \al_1(g_1)=|\det g_1 |^{2n-2m-\tau+k-\ell +1}&, \ \ \ \al_2(g_2)=|\det g_2 |^{-2n+2m+\tau-k+\ell -1}.
  \end{aligned}
\end{equation*}
\end{theorem}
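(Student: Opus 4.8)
The statement to prove is Theorem~\ref{thm:abinduction} (the ``Induction Principle for Dual Pairs,'' attributed to \cite[Corollary 3.21]{AB}), asserting that a nontrivial $M$-equivariant map $\Omega_M \to \sigma_1 \boxtimes \sigma_2$ induces a nontrivial map from the oscillator representation $\Omega$ of the dual pair to the outer tensor product of parabolically induced modules, with the displayed shift characters $\alpha_1, \alpha_2$.

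\textbf{Proof proposal.} The plan is to work directly with the Schr\"odinger (or Fock) model of the oscillator representation $\Omega$ for the dual pair $H_1 \times H_2 = O(2m+\tau) \times Sp(2n)$, and to exploit the compatibility of $\Omega$ with the Siegel-type parabolic restrictions on each factor. First I would decompose the parabolics $P_i = M_i N_i$ explicitly: $M_1 = GL(k) \times O(2m+\tau-2k)$ acts on a polarization of the defining space of $H_1$, and similarly $M_2 = GL(\ell) \times Sp(2n-2\ell)$; the key point is that the oscillator representation attached to the ambient symplectic space $W = V_1 \otimes V_2$ restricts, along the parabolic $P_1 \times P_2$, to a representation built out of (i) the oscillator representation $\Omega_M$ of the ``smaller'' dual pair $O(2m+\tau-2k) \times Sp(2n-2\ell)$, (ii) oscillator/Weil pieces associated to the $GL$-factors, and (iii) a one-dimensional twist coming from the action of the $GL$-factors on the isotropic subspaces. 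This is the standard ``see-saw + Siegel parabolic'' computation; the twist in (iii) is precisely what produces the characters $\alpha_1, \alpha_2$, and I would compute their exponents by bookkeeping the action of $\det g_1, \det g_2$ on the relevant determinant lines (the modular characters of $N_1, N_2$ together with the action on the $GL$-isotropic pieces), matching $2n-2m-\tau+k-\ell+1$ on the $H_1$ side and its negative on the $H_2$ side by the duality of the pairing.

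\textbf{Key steps, in order.} (1) Set up the Fock model of $\Omega$ and identify the $P_1$-action (equivalently $P_2$-action) via the action on a maximal isotropic subspace of $V_1$ (resp. $V_2$). (2) Establish the restriction formula $\Omega|_{P_1 \times P_2} \cong (\text{Weil rep of } GL(k)\times GL(\ell) \text{ piece}) \,\widehat\otimes\, \Omega_M$, twisted by an explicit character; this is where the see-saw pair argument enters. (3) Given the hypothesized $M$-map $\Omega_M \to \sigma_1 \boxtimes \sigma_2$, tensor with the $GL$-piece and apply Frobenius reciprocity on each factor: a nonzero $M_i$-map out of the $N_i$-coinvariants of $\Omega$ yields, by Frobenius reciprocity for the (normalized or unnormalized) induction $\mathrm{Ind}_{P_i}^{H_i}$, a nonzero $H_i$-map $\Omega \to \mathrm{Ind}_{P_i}^{H_i}[\alpha_i \sigma_i]$. (4) Combine the two factors: because $\Omega$ is a single irreducible-ish module for $H_1 \times H_2$ (more precisely, because the $N_1 \times N_2$-coinvariants carry a joint $M_1 \times M_2$-action), the two maps assemble into one nontrivial map $\Omega \to \mathrm{Ind}_{P_1}^{H_1}[\alpha_1\sigma_1] \boxtimes \mathrm{Ind}_{P_2}^{H_2}[\alpha_2\sigma_2]$; nontriviality is inherited from nontriviality of $\Omega_M \to \sigma_1 \boxtimes \sigma_2$ since the passage through coinvariants and $GL$-Weil pieces is faithful on the relevant cyclic vector. (5) Finally, verify the normalization of the shift: track the half-sum of roots in $N_i$ against the character by which $GL(k)$ (resp. $GL(\ell)$) acts on the isotropic subspace inside the oscillator model, which produces the stated exponents; here the $+1$ and the mixed terms $k-\ell$ and $2n-2m-\tau$ come out of comparing $\rho_{N_1}$, $\rho_{N_2}$ and the ``$\det$-weight'' of $\Omega$ on the $GL$-factors.

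\textbf{Main obstacle.} The genuinely delicate part is Step~(2) together with the exponent bookkeeping in Step~(5): one must pin down, with the correct normalization conventions (normalized vs. unnormalized parabolic induction, and the choice of additive character defining $\Omega$), exactly which power of $|\det|$ appears, and in particular get the constant $+1$ and the sign conventions on the $H_2$-side right. A clean way to handle this is to reduce to the rank-one reduction (the case $k=\ell=1$, or the case where one of the $GL$-factors is trivial) where the see-saw pair is $O(2m+\tau) \times Sp(2n)$ against $O(2m+\tau) \times Sp(2)$ type configurations, compute the character there by an explicit Jacobi-theta / Weil-constant calculation, and then assemble the general $GL(k), GL(\ell)$ case by induction in stages; since the statement is quoted as \cite[Corollary 3.21]{AB}, I would ultimately cite that reference for the precise normalization rather than reproduce the full Weil-constant computation, and in this paper simply indicate the see-saw argument and the Frobenius reciprocity step, referring to \cite{AB} for the constants.
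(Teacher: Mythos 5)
The paper does not supply a proof of Theorem~\ref{thm:abinduction}: it is quoted verbatim from \cite[Corollary 3.21]{AB}, and the only local commentary is the remark immediately following, which explains that the exponents in $\alpha_1,\alpha_2$ appear with opposite sign to \cite{AB} because the parabolic conventions differ. So there is no ``paper's own proof'' against which to compare your sketch.

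That said, your outline is a faithful account of the standard argument behind the cited result (Fock/Schr\"odinger model, restriction of $\Omega$ along the polarizations defining $P_1\times P_2$, extraction of $\Omega_M$ tensored with a $GL\times GL$ Weil piece and a determinant twist, then Frobenius reciprocity on each factor), which is essentially the route taken in \cite{AB}. You also correctly identify the exponent bookkeeping as the delicate point and sensibly propose to defer the precise Weil-constant normalization to \cite{AB}. Two refinements worth recording. First, what one actually computes in Step~(2) is the $N_1\times N_2$-coinvariants (Jacquet module) of $\Omega$; this carries a natural filtration, and the piece isomorphic to the $GL$-Weil factor tensored with $\Omega_M$ (twisted by $\alpha_1\boxtimes\alpha_2$) is a distinguished subquotient rather than the whole coinvariant space, so Frobenius reciprocity must be applied to that subquotient. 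Second, the sign issue the authors flag in their remark --- that \cite{AB} uses opposite parabolics --- is precisely the kind of convention discrepancy your Step~(5) has to navigate; it does not affect the structure of the argument, but it does affect whether one lands on $\Ind_{P_i}$ or $\Ind_{\overline{P_i}}$, and hence the sign of the exponent.
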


\begin{remark}
The exponents in $\al_1,\ \al_2$ are negatives of the ones in \cite{AB}, because the parabolic
subgroups in \cite{AB} are opposite from the ones used here.
\end{remark}

The following corollary is an almost direct consequence of Theorem \ref{thm:abinduction}:
\begin{corollary}  \label{cor:abstring}  \mbox{}\\
\noindent (a)\ Let $G_i \times G_{j}$ be a dual pair, with $G_i'$ and $G_{j}'$ 
be of the same type of $G_i$ and $G_{j}$ respectively such that
\begin{align*}
L_i &= GL(\alpha) \times \dots \times GL(\beta) \times G_i'\\
L_{j} &= GL(\alpha) \times \dots \times GL(\beta) \times G_{j}'
\end{align*}
are Levi subgroups of $G_i$ and $G_{j}$ respectively.
Suppose there is a non-trivial $G_i' \times G_j'-$equivariant map
$$\Omega' \longrightarrow \Pi_i' \boxtimes \Pi_{j}'$$
corresponding to the dual pair $G_i' \times G_{j}'$, with
$\Pi_i',\Pi_{j}'$ spherical, and the spherical vector in the
image. Then there is a non-trivial intertwining operator
\begin{align*}
\Omega \longrightarrow I^{G_i}\left((
    a \dots A) \boxtimes\cdots \boxtimes(
    b \dots B)\boxtimes\ \Pi_i' \right) \boxtimes I^{G_{j}}\left((
    a \dots A)^* \boxtimes\cdots  \boxtimes(
    b \dots B)^* \boxtimes\ \Pi_{j}' \right),
\end{align*}
to the induced module with $\alpha = A-a+1$, $\dots$, $\beta = B-b+1$. Moreover, the spherical vector is in its image.

\medskip
\noindent (b)\ Let $k >0$ be an integer such that $k \equiv \delta\ (\mathrm{mod}\ 2)$
and $O(2r+\delta) \times Sp(2r+\delta+k)$ forms a dual pair.
Suppose there is a non-trivial $O(2r+\delta) \times Sp(2r+\delta+k)-$equivariant map
$$
\Omega' \longrightarrow \Pi \boxtimes \Pi'
$$
with the spherical vector in its image.
Then there is an intertwining operator corresponding to the dual pair
$O(2r+\delta+2k) \times Sp(2r+\delta+k)$
$$
\Omega \longrightarrow I^{O(2r+\delta+2k)}\left(\left(-\frac{k}{2}+1 \ldots
  \frac{k}{2}\right)\boxtimes\ \Pi \right) \boxtimes \Pi'
$$
with the spherical vector in its image.
\end{corollary}

\begin{proof}
(a) follows  by induction on the number of $GL-$strings. The initial
case is the fact that the oscillator representation for the 
dual pair $GL(m) \times GL(m)$ has a quotient
$$\Omega_{GL} \longrightarrow \chi \boxtimes \chi^*$$
for any 1-dimensional character $\chi$ of $GL(m)$ (cf. \cite[Proposition 2.2(1)]{AB}).

\medskip
(b) Consider 
  \begin{align*}
    &H_1=O(2r+\delta + 2k), && M_1=GL(k) \times O(2r+\delta);\\
    &H_2=Sp(2r+\delta + k), && M_2=GL(0) \times Sp(2r+\delta+k);
  \end{align*}
and $\Omega_M \rightarrow (\mathrm{triv}_{GL(k)} \otimes \Pi) \boxtimes \Pi'$. Then 
\[
2n-2m -\tau + k - \ell + 1 = (2r+\delta+k) - (2r + \delta + 2k) + k - 0 + 1 = 1
\]
and Theorem \ref{thm:abinduction} implies there is a non-trivial map
\begin{align*}
\Omega  \to 
I^{H_1}\left(
   \left(- \frac{k-1}{2} + \frac{1}{2} \dots \frac{k-1}{2}+ \frac{1}{2}\right)\boxtimes\ \Pi \right) \boxtimes  \Pi'
	= I^{H_1}\left(\left(-\frac{k}{2}+1 \ldots \frac{k}{2}\right)\boxtimes\ \Pi\right) \boxtimes \Pi'.
\end{align*}
\end{proof}

\begin{proposition} \label{prop:xbaromap}
Let $\C O$ be a classical nilpotent orbit. Then there is a $(\fg_c, K_c)-$module map
\[
\Omega(\ovl{\C O}) \longrightarrow \C B(\C O).
\]
with the spherical vector in its image.
\end{proposition}

\begin{proof} The proof is  by induction on the number of columns of $\C O$ as
  defined  in \eqref{eq:o}. When $p = 0$, consider the orbits $\C Q_0 = (c_0, c_1)$ in $Sp(c_0+c_1,\mathbb{C})$, and the zero orbit $\C Q_1 = (c_1)$ in $O(c_1, \mathbb{C})$. It is obvious that
$$\Omega(\ovl{\C Q_1}) = \mathcal{U}(\C Q_1) = \C B(\C Q_1)$$
are all equal to the trivial representation of $O(c_1,\mathbb{C})$, so the proposition holds for $\C Q_1$. 

For $\C Q_0$, consider the pair $O(c_1)\times Sp(c_0 + c_1)$. By \cite[Theorem 3.5.1]{B5}, the unipotent representations
$\mathcal{U}(\C Q_1) \leftrightarrow \mathcal{U}(\C Q_0)$ are in dual pair correspondence. So
there is a non-trivial $O(c_1) \times Sp(c_0+c_1)-$equivariant map
\begin{align*}
\Omega_1 \longrightarrow \mathcal{U}(\C Q_1) \boxtimes \mathcal{U}(\C Q_0).
\end{align*}
Since $\mathcal{U}(\C Q_1)$ is the trivial representation, the above
map factors through $\mathfrak{g}_1\Omega_1$, and there is a
non-trivial $(\fk g_c, K_c)-$equivariant map
$$
\Omega(\ovl{\C Q_0}) = \Omega_1/\mathfrak{g}_1\Omega_1 \longrightarrow \mathcal{U}(\C Q_0) = \C B(\C Q_0) .
$$
as claimed in the proposition. 

\smallskip

Suppose the proposition holds for $\C O_2 := (c_2\geq c_3 \geq \dots
\geq c_{2p} \geq c_{2p+1})$ in $G_2$. We will prove that it also holds for $\C O_1 := (c_1\geq c_2 \geq \dots \geq c_{2p} \geq c_{2p+1})$ in $G_1$ and $\C O_0 := (c_0\geq c_1 \geq \dots \geq c_{2p} \geq c_{2p+1})$ in $G_0$.

Let 
$$\tau(\C O_2) := \{i_r, \dots, i_1\}\quad \text{and}
\quad \C O_2' := (c_2' \geq c_3' \geq \dots \geq c_{2q+1}')$$ 
be as defined in \eqref{eq:oprime}.
By induction hypothesis, there is a non-trivial $((\fk g_2)_c, (K_2)_c)-$equivariant map
\begin{align*}
\Omega(\ovl{\C O_2}) := \Omega_3 \otimes\dots \otimes \Omega_{2p+1}/ (\fg_3 \times &\dots \times \fg_{2p+1})(\Omega_3 \otimes \dots \otimes \Omega_{2p+1})  \longrightarrow \\ 
&\C B(\C O_2) := I^{G_2}\left(\la[c_{2i_1},c_{2i_1}] \boxtimes \cdots \boxtimes \la[c_{2i_r},c_{2i_r}] \boxtimes \mathcal{U}(\C O_2') \right),
\end{align*}
noting that $\ovl{X}_{GL}(\la[a,a]) = \ovl{X}_{GL(a)}\left(-\frac{a}{2}+1 \ldots \frac{a}{2}\right) = |\det|^1$ by \eqref{eq:laxy}.

\smallskip
To prove the proposition for $\C O_1 := (c_1\geq c_2 \geq \dots \geq c_{2p} \geq c_{2p+1})$ in $G_1$, one needs to consider the following two cases:

\smallskip
\noindent \underline{\bf Case I: $c_1 \neq c_2 (= c_2')$.} In this case, $\tau(\C O_1) = \tau(\C O_2)$ and
$\C O_1' := (c_1 \geq c_2' \geq c_3' \geq \dots \geq c_{2q+1}')$. 
By \cite[Theorem 3.5.1]{B5}, there is a non-trivial $G_1' \times G_2'-$equivariant map
$$\Omega' \longrightarrow \mathcal{U}(\C O_1') \boxtimes \mathcal{U}(\C O_2')$$
to the appropriate unipotent representations. Applying Corollary \ref{cor:abstring}(a) to the above correspondence, there is a non-trivial $G_1 \times G_2-$equivariant map
$$
\Omega_2 \longrightarrow \begin{matrix} I^{G_1}\left(\la[c_{2i_1},c_{2i_1}] \boxtimes \cdots \boxtimes \la[c_{2i_r},c_{2i_r}] \boxtimes \mathcal{U}(\C O_1') \right)\\ \boxtimes \\  I^{G_2}\left(\la[c_{2i_1},c_{2i_1}]^* \boxtimes \cdots \boxtimes \la[c_{2i_r},c_{2i_r}]^*\boxtimes \mathcal{U}(\C O_2') \right) \end{matrix} =\ \C B(\C O_1) \boxtimes \C B(\C O_2)^*.
$$
Therefore, we have the non-trivial $G_1 \times G_2-$equivariant map
\begin{align*}
\Omega_2 \otimes (\Omega_3 \otimes \dots \otimes \Omega_{2p+1}/ (\fg_3 \times \dots \times \fg_{2p+1})(\Omega_3 \otimes \dots \otimes \Omega_{2p+1})) &\longrightarrow (\C B(\C O_1) \boxtimes \C B(\C O_2)^*) \otimes \C B(\C O_2) \\
&\longrightarrow \C B(\C O_1) \boxtimes \mathrm{triv}_{G_2},
\end{align*}
where the last $\longrightarrow$ is the contraction map $\C B(\C O_2)^* \otimes \C B(\C O_2) \to \mathrm{triv}_{G_2}$. As a consequence, the map factors through $\fg_2\Omega_2$ and one has
\begin{align*}
\Omega(\ovl{\C O_1}) := \Omega_2 \otimes \dots \otimes \Omega_{2p+1} / (\fg_2 \times \fg_3 \times \dots \times \fg_{2p+1})(\Omega_2 \otimes \Omega_3 \otimes \dots \otimes \Omega_{2p+1}) \longrightarrow \C B(\C O_1)
\end{align*}
as in the proposition.

\medskip
\noindent \underline{\bf Case II: $c_1 = c_2 (= c_2')$.} Then $\tau(\C O_1) = \tau(\C O_2) \cup \{1\}$, and
$\C O_1' := (c_3' \geq \dots \geq c_{2q+1}')$. 
By \cite[Theorem 3.5.1]{B5} again, there is a non-trivial $G_3' \times G_2'-$equivariant map
$$\Omega' \longrightarrow \mathcal{U}(\C O_1') \boxtimes \mathcal{U}(\C O_2').$$
Now apply Corollary \ref{cor:abstring}(b) with $k = c_2$ to the above correspondence to get a non-trivial there is a non-trivial $G_1' \times G_2'-$equivariant map
$$
\Omega'' \longrightarrow I^{O}\left(\la[c_{2},c_{2}] \boxtimes \mathcal{U}(\C O_1') \right)\\ \boxtimes \mathcal{U}(\C O_2').
$$
Finally, using Corollary \ref{cor:abstring}(a) on the above correspondence, one has a non-trivial $G_1 \times G_2-$equivariant map
\begin{align*}
\Omega_2 \longrightarrow\ &\begin{matrix} I^{G_1}\left(\la[c_{2i_2},c_{2i_2}] \boxtimes \cdots \boxtimes \la[c_{2i_r},c_{2i_r}]\ \boxtimes \la[c_{2},c_{2}] \boxtimes \mathcal{U}(\C O_1') \right)\\ \boxtimes \\  I^{G_2}\left(\la[c_{2i_2},c_{2i_2}]^* \boxtimes \cdots \boxtimes \la[c_{2i_r},c_{2i_r}]^* |\ \mathcal{U}(\C O_2') \right) \end{matrix}\\ 
\xrightarrow{\ dm}\ &\begin{matrix} I^{G_1}\left( \la[c_{2},c_{2}] \boxtimes \la[c_{2i_2},c_{2i_2}] \boxtimes \cdots \boxtimes \la[c_{2i_r},c_{2i_r}]\ \boxtimes \mathcal{U}(\C O_1') \right)\\ \boxtimes \\  I^{G_2}\left(\la[c_{2i_2},c_{2i_2}]^* \boxtimes \cdots \boxtimes \la[c_{2i_r},c_{2i_r}]^* |\ \mathcal{U}(\C O_2') \right) \end{matrix} =\ \C B(\C O_1) \boxtimes \C B(\C O_2)^*,
\end{align*}
where the $\stackrel{dm}{\longrightarrow}$ comes from applying the intertwining operator $\iota$ between the nested strings $\la[c_{2},c_{2}]$ and $\la[c_{2i_j},c_{2i_j}]$ on the first induced module.
Consequently, the proposition follows from the arguments in Case I. 

\medskip
As for $\C O = (c_0 \geq c_1 \geq \dots \geq c_{2p} \geq c_{2p+1})$ in $G_0$, note that
$$\Omega' \longrightarrow \mathcal{U}(\C O_1') \boxtimes \mathcal{U}(\C O_0')$$ 
are in dual pair correspondence by \cite[Theorem 3.5.1]{B5}, so one can apply Corollary \ref{cor:abstring}(a) as before to conclude that
$$\Omega_1 \longrightarrow \C B(\C O_1)^* \boxtimes \C B(\C O_0).$$
Then the contraction argument above applies, so that one has
$\Omega(\ovl{\C O_0}) \longrightarrow \C B(\C O_0)$ as stated in the proposition.
\end{proof}

\begin{example}
We present an example of the above proposition for $\C O = (8,{\bf 6,6},4,2,0)$ in $\mathfrak{sp}(26,\mathbb{C})$. Note that $c_1 = 6 = c_2 = 6$ are in bold, so that $\C O' = (8,4,2,0)$.

The intermediate orbits and dual pairs are given by:

\begin{align*}
&G_4 = Sp(2,\mathbb{C}); && G_3 = O(6,\mathbb{C}); && G_2 = Sp(12,\mathbb{C});
&& G_1 = O(18,\mathbb{C}); && G_0 = Sp(26,\mathbb{C}); \\
&\C O_4 = (20); && \C O_3 = (420); && \C O_2 = (6420);
&& \C O_1 = ({\bf 66}420); && \C O_0 = (8{\bf 66}420)\\
&\C O_4' = (20); && \C O_3' = (420); && \C O_2' = (6420);
&& \C O_1' = (420); && \C O_0' = (8420).
\end{align*}
Begin with the trivial representation
$$\Omega(\ovl{\C O_4}) = \mathcal{U}(\C O_4') = \ovl{X}_{G_4}(1) = \C B(\C O_4).$$
By \cite[Theorem 3.5.1]{B5}, the dual pair correspondence $G_3 \times G_4$ gives
$$\Omega_4 \longrightarrow \mathcal{U}(\C O_3') \boxtimes\mathcal{U}(\C O_4') = \ovl{X}_{G_3}(-1,0,1) \boxtimes \ovl{X}_{G_4}(1)$$
Since the last module of  $G_4$ is trivial, the map factors through $\mathfrak{g}_4\Omega_4$ and hence we have
\begin{equation} \label{eq:o3}
\Omega(\ovl{\C O_3}) := \Omega_4/\mathfrak{g}_4\Omega_4\longrightarrow \mathcal{U}(\C O_3') = \C B(\C O_3).
\end{equation}

Continue with the dual pair $G_3 \times G_2$. \cite[Theorem 3.5.1]{B5} gives
$$\Omega_3 \longrightarrow \mathcal{U}(\C O_3') \boxtimes  \mathcal{U}(\C O_2') = \ovl{X}_{G_3}(-1,0,1) \boxtimes \ovl{X}_{G_2}(-1,0,1;1,2,3).$$
Note that $\mathcal{U}(\C O_3')^* = \mathcal{U}(\C O_3')$ is self-dual. Combining \eqref{eq:o3} with the above map, one has
$$\Omega_3 \otimes \Omega_4/\mathfrak{g}_4\Omega_4 \longrightarrow \left( \mathcal{U}(\C O_3') \otimes\mathcal{U}(\C O_3')^* \right) \boxtimes \mathcal{U}(\C O_2') \longrightarrow \mathrm{triv}_{G_3} \boxtimes \mathcal{U}(\C O_2')$$
and hence it factors through $\mathfrak{g}_3\Omega_3$ so that
\begin{equation} \label{eq:o2}
\Omega(\ovl{\C O_2}) := \Omega_3 \otimes \Omega_4/(\mathfrak{g}_3 \times \mathfrak{g}_4)\Omega_3 \otimes \Omega_4 \longrightarrow \mathcal{U}(\C O_2') = \C B(\C O_2).
\end{equation}

The case for $\C O_1$ is different, since $\C O_1 \neq \C O_1'$ and we are in \underline{\bf Case II} of the above proof. Instead of considering $G_1 \times G_2$ directly, first consider the dual pair $G_1' \times G_2 = G_3 \times G_2 = O(6,\mathbb{C}) \times Sp(12,\mathbb{C})$
where we have a non-trivial map
$$\Omega' \longrightarrow \mathcal{U}(\C O_1') \boxtimes \mathcal{U}(\C O_2')$$
as above. By Corollary \ref{cor:abstring}(b), the dual pair $G_1 \times G_2$ has a non-trivial map
$$\Omega_2 \longrightarrow I^{G_1}\left((-2\ldots3) \boxtimes \ \mathcal{U}(\C O_1')\right) \boxtimes \mathcal{U}(\C O_2') = I^{G_1}\left(\la[6,6] \boxtimes \mathcal{U}(\C O_1')\right) \boxtimes \mathcal{U}(\C O_2')$$
Using \eqref{eq:o2} and $\mathcal{U}(\C O_2')^* = \mathcal{U}(\C O_2')$, one has
\begin{align*}
\Omega_2 \otimes (\Omega_3 \otimes \Omega_4/(\mathfrak{g}_3 \times \mathfrak{g}_4)\Omega_3 \otimes \Omega_4) &\longrightarrow I^{G_1}\left(\la[6,6] \boxtimes \mathcal{U}(\C O_1')\right) \boxtimes \mathcal{U}(\C O_2') \otimes \mathcal{U}(\C O_2')^* \\
&\longrightarrow I^{G_1}\left(\la[6,6] \boxtimes \mathcal{U}(\C O_1')\right) \boxtimes \mathrm{triv}_{G_2}
\end{align*}
and hence the map factors through $\mathfrak{g}_2\Omega_2$:
\begin{equation} \label{eq:o1}
\Omega(\ovl{\C O_1}) := \Omega_2 \otimes \Omega_3 \otimes \Omega_4/(\mathfrak{g}_2 \times \mathfrak{g}_3 \times \mathfrak{g}_4)\Omega_2 \otimes \Omega_3 \otimes \Omega_4 \longrightarrow I^{G_1}\left(\la[6,6] \boxtimes \mathcal{U}(\C O_1')\right) = \C B(\C O_1)
\end{equation}

Finally, by \cite[Theorem 3.5.1]{B5}, one has
$$
\Omega' \longrightarrow \mathcal{U}(\C O_0') \boxtimes \mathcal{U}(\C
O_1') = \ovl{X}_{O(6)}(-1,0,1) \boxtimes \ovl{X}_{Sp(14)}(-1,0,1;1,2,3,4)
$$

Hence Corollary \ref{cor:abstring}(a) gives the dual pair
correspondence of  $G_0 \times G_1$:
$$\Omega_1 \longrightarrow I^{G_1}\left(\la[6,6]^* \boxtimes \mathcal{U}(\C O_1')\right) \boxtimes I^{G_0}\left(\la[6,6] \boxtimes \mathcal{U}(\C O_0')\right)= \C B(\C O_1)^*\boxtimes \C B(\C O_0).$$
By combining the above map with \eqref{eq:o1}, along with the contraction arguments, one has
$$\Omega(\ovl{\C O_0}) = \Omega_1 \otimes \cdots \otimes \Omega_4/(\mathfrak{g}_1 \times \cdots \times \mathfrak{g}_4)\Omega_1 \otimes \cdots \otimes \Omega_4 \longrightarrow  \C B(\C O_0)$$
\end{example}

\begin{corollary} \label{cor:xbaroinduce}
Let $\C O$ be a classical nilpotent orbit. Then there is a $(\fg_c, K_c)-$equivariant map
$$\C E :\C B(\ovl{\C O}) \longrightarrow 
\C B(\C O) $$
with the spherical vector in its image.
\end{corollary}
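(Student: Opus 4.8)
```latex
The plan is to deduce Corollary \ref{cor:xbaroinduce} from Proposition \ref{prop:xbaroinduce} by factoring the map $\Omega(\ovl{\C O}) \to \C B(\C O)$ through the quotient defining $\C B(\ovl{\C O})$. Recall from Definition \ref{def:kp} that
\[
\C B(\cCO) = \Omega(\cCO)^{K^1/(K^1)^0}, \qquad
\Omega(\cCO) = \Omega_1 \otimes \dots \otimes \Omega_{2p+1} / (\fg_1 \times \dots \times \fg_{2p+1})(\Omega_1 \otimes \dots \otimes \Omega_{2p+1}),
\]
so $\C B(\ovl{\C O})$ is built from $\Omega(\ovl{\C O})$ by a quotient followed by passage to $K^1/(K^1)^0$-invariants. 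The first step is therefore to observe that the $(\fg_c, K_c)$-equivariant map $\varphi \colon \Omega(\ovl{\C O}) \to \C B(\C O)$ produced in Proposition \ref{prop:xbaroinduce} is trivial on the $(\fg_1 \times \dots \times \fg_{2p+1})$-action: indeed, $\C B(\C O)$ is a module for $G = G_0$ only, and carries no action of the auxiliary factors $G_1, \dots, G_{2p+1}$ — equivalently, these Lie algebras act by zero on $\C B(\C O)$ — so $\varphi$ kills $(\fg_1 \times \dots \times \fg_{2p+1})(\Omega_1 \otimes \dots \otimes \Omega_{2p+1})$ automatically and descends to $\Omega(\ovl{\C O})$ in the sense of Definition \ref{def:kp}, and in fact the construction in the proof of Proposition \ref{prop:xbaroinduce} already produces the map at the level of $\Omega(\ovl{\C O})$ via the iterated contraction arguments.

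Next I would pass to the component-group invariants. Since $K^1/(K^1)^0$ acts on $\Omega(\ovl{\C O})$ commuting with the $(\fg_c, K_c)$-action of $G_0$, and $\C B(\C O)$ carries the trivial $K^1/(K^1)^0$-action, the map $\varphi$ restricts to a $(\fg_c, K_c)$-equivariant map
\[
\C B(\ovl{\C O}) = \Omega(\ovl{\C O})^{K^1/(K^1)^0} \longrightarrow \C B(\C O)^{K^1/(K^1)^0} = \C B(\C O).
\]
The only point needing care is non-triviality: one must check that the spherical vector — which by Proposition \ref{prop:xbaroinduce} lies in the image of $\varphi$ — actually lies in the invariant subspace $\Omega(\ovl{\C O})^{K^1/(K^1)^0}$, i.e. is not annihilated upon averaging over $K^1/(K^1)^0$. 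This holds because the spherical vector is the image of the (normalized) joint lowest $K$-vector in the oscillator representations, which is fixed by all the compact factors $K_1, \dots, K_{2p+1}$ (it is the Gaussian/vacuum vector), hence a fortiori fixed by $K^1/(K^1)^0$; so its class in $\C B(\ovl{\C O})$ is nonzero and maps to the spherical vector of $\C B(\C O)$.

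The main (and essentially only) obstacle is the bookkeeping verifying that the map of Proposition \ref{prop:xbaroinduce} genuinely factors through the full Kraft–Procesi quotient $\Omega(\ovl{\C O})$ and then through the $K^1/(K^1)^0$-invariants in a way that preserves the spherical vector; once the target is seen to be a pure $G_0$-module with trivial auxiliary action and the spherical vector is identified with the vacuum vector, everything is formal. I expect this to be a short argument, essentially unwinding Definition \ref{def:kp} and Theorem \ref{t:br} and citing Proposition \ref{prop:xbaroinduce}, with no new computation required.
```
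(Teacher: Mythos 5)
Your proposal is correct and follows the same route as the paper: compose the inclusion $\mathcal{B}(\ovl{\C O}) = \Omega(\ovl{\C O})^{K^1/(K^1)^0} \hookrightarrow \Omega(\ovl{\C O})$ with the map of Proposition \ref{prop:xbaroinduce}, and note that the spherical vector already lies in the invariant subspace. Your first paragraph is redundant since Proposition \ref{prop:xbaroinduce} already produces a map defined on $\Omega(\ovl{\C O})$, but this does not affect correctness.
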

\begin{proof}
The inclusion map
$$\mathcal{B}(\ovl{\C O}) := \Omega(\ovl{\C O})^{K^1/(K^1)^0}
	\hookrightarrow \Omega(\ovl{\C O})$$
has the trivial $K_c-$type in its image. Then the result follows directly
from the above inclusion and Proposition \ref{prop:xbaromap}.
\end{proof}

\begin{remark} \label{rmk:cyclic}
By Corollary \ref{cor:xbaroinduce}, $\C E$ maps the cyclic submodule of 
$\C B(\ovl{\C O})$ surjectively onto $\Xi(\C O) \subseteq \C B(\C O),$
the cyclic submodule of $\C B(\C O)$. In particular, $\Xi(\C O)$ gives a {\bf `lower bound'} on 
$\C B(\ovl{\C O})$.
In the next couple of sections, we will study $\Xi(\C O)$ in full detail, and conclude that it is also an {\bf `upper bound'}.
Consequently, we have $\C B(\ovl{\C O}) \cong \Xi(\C O)$, and $\C E$ is an injection as stated in the introduction (see Theorem \ref{thm:main}).
\end{remark}

\section{The Cyclic Submodule of $\C B(\C O)$} \label{sec:cyclic}
In this section, we study the cyclic submodule $\Xi(\C O)$ of $\C B(\C O)$. The main result of this section is the following:
\begin{theorem} \label{thm:cyclic}
Let $\C O$ be a classical nilpotent orbit of the form 
$$\C O =  \begin{cases} (c_0 \geq c_1 \geq \dots \geq c_{2p} \geq c_{2p+1}) & \text{if}\ G = Sp(2n,\mathbb{C}),\\ 
( c_1 \geq c_2 \geq \dots \geq c_{2p} \geq c_{2p+1}) & \text{if}\ G = O(2n+\delta,\mathbb{C})
\end{cases}.$$ 
Consider the induced moudle
\begin{equation} \label{eq:gammao}
\Gamma(\C O) := \begin{cases}
I^G\left(\la[c_0,c_1] \boxtimes \cdots \boxtimes \la[c_{2p},c_{2p+1}]\right) & \text{if}\ G = Sp(2n,\mathbb{C}),   \\
I^G\left(\la[c_2,c_3] \boxtimes \cdots \boxtimes \la[c_{2p},c_{2p+1}] \boxtimes  \mathrm{triv}_{O(c_1)} \right) & \text{if}\ G = O(2n+\delta,\mathbb{C})
\end{cases}.
\end{equation}
Then the multiplicities of the diminutive $K_c-$types of $\Gamma(\C O)$ coincide with that of $\Xi(\C O)$. 
\end{theorem}

\begin{lemma} \label{lem:lem1}
Let $\C O$ be a nilpotent orbit. Then the module $\Gamma(\C O)$ 
defined in Theorem \ref{thm:cyclic} is cyclic.
\end{lemma}
\begin{proof}
Let $\la^0$ be dominant and a $W-$conjugate
of the infinitesimal character of $\Gamma(\C O)$. The module
$\Gamma(\C O)$ is the image of the intertwining operator 
$$I_1: X(\la^0) \longrightarrow X(w_1\la^0)$$
for some $w_1 \in W$ given in Equation (6.72) of \cite{B4}. 
For instance, in Equations (6.73) -- (6.77) of \cite{B4}, $w_1$ is given explicitly when $G$ is of Type $B$.
Since the spherical vector for the principal series at dominant
parameter is cyclic, and the induced module $\Gamma(\C O)$ is a homomorphic
image, $\Gamma(\C O)$ is cyclic as well. 
\end{proof}
%
%
\begin{example}
Consider $\C O = (8,6,4,4)$ in $G = Sp(22,\mathbb{C})$. Then 
$$\Gamma(\C O) := I^G(\la[8,6] \boxtimes \la[4,4]) = I^G((-2-101234)\boxtimes (-1012)),$$ 
and the dominant infinitesimal character is $\la^0 = (43222111100)$.

By applying suitable intertwining operators $\iota$ in \eqref{eq:iota} to the principal series representation
$$X(\la^0) = I^G((4) \boxtimes(3)\boxtimes(2)\boxtimes(2)\boxtimes(2)\boxtimes(1) \boxtimes(1)\boxtimes(1)\boxtimes(1)\boxtimes(1)\boxtimes(0)\boxtimes(0)),$$
Proposition \ref{p:glgl} implies that
$I^G((2) \boxtimes(2)\boxtimes(1)\boxtimes(1)\boxtimes(1)\boxtimes(0)\boxtimes(01234))$
is a homomorphic image of $X(\la^0)$. Similarly,
$$I^G((2) \boxtimes(1)\boxtimes(1)\boxtimes(012)\boxtimes(01234)) \overset{dm}{\rightarrow}
I^G((2) \boxtimes(1)\boxtimes(1)\boxtimes(01234)\boxtimes(012))$$
is a homomorphic image of $X(\la^0)$. Now reflect along the long roots and get
$$I^G((2) \boxtimes(1)\boxtimes(1)\boxtimes(01234)\boxtimes(012)) \rightarrow
I^G((2) \boxtimes(1)\boxtimes(1)\boxtimes(01234)\boxtimes(-2-10)).$$
By applying Proposition \ref{p:glgl} again to the strings $(01234)\boxtimes(-2-10)$, one has
$$I^G((2) \boxtimes(1)\boxtimes(1)\boxtimes(01234)\boxtimes(-2-10)) \rightarrow
I^G((2) \boxtimes(1)\boxtimes(1)\boxtimes(-2-101234)\boxtimes(0)).$$
Now the long string $\la[8,6] = (-2-101234)$ is nested with the strings on the left, hence
$$I^G((2) \boxtimes(1)\boxtimes(1)\boxtimes(-2-101234)\boxtimes(0)) \overset{dm}{\rightarrow}
I^G((-2-101234)\boxtimes(2) \boxtimes(1)\boxtimes(1)\boxtimes(0)).$$
By applying a similar argument on the strings $(2) \boxtimes(1)\boxtimes(1)\boxtimes(0)$, one can
conclude that
$$I^G((-2-101234)\boxtimes(2) \boxtimes(1)\boxtimes(1)\boxtimes(0)) \rightarrow
I^G((-2-101234)\boxtimes(-1012)).$$
is the homomorphic image of an intertwining operator.
\end{example}

\begin{proposition} \label{prop:main}
Let $\C O$ be a nilpotent orbit given in Theorem \ref{thm:cyclic} with $\tau(\C O)$ and 
$\C  O'$ are as in Definition \ref{def:roprime}. Then there is a non-trivial $(\fk g_c, K_c)-$morphism $\eta$ from $\Gamma(\C O)$ to the induced module
\begin{equation} \label{eq:glstrings}
\begin{aligned} 
&I^G\left( \la[c_{2r_1},c_{2r_1}] \boxtimes  \cdots \boxtimes  
\la[c_{2r_k},c_{2r_k}] \boxtimes 
\la[c_0',c_1'] \boxtimes  \cdots \boxtimes  \la[c_{2q}',c_{2q+1}']\right) \quad \text{or} \\ 
&I^G\left(  \la[c_{2r_1},c_{2r_1}] \boxtimes  \cdots \boxtimes  
\la[c_{2r_k},c_{2r_k}] \boxtimes 
\la[c_2',c_3'] \boxtimes  \cdots \boxtimes  \la[c_{2q}',c_{2q+1}']  \boxtimes \mathrm{triv}_{O(c_1')} \right)
\end{aligned}
\end{equation}
for $G = Sp(2n,\mathbb{C})$ or $O(2n+\delta,\mathbb{C})$ respectively. Moreover, $\eta$ is injective on the level of diminutive $K_c-$types.
\end{proposition}

We postpone the proof of the above proposition to the end of this section. 
Assuming its validity, we give a proof of Theorem \ref{thm:cyclic}.

\subsection{Proof of Theorem \ref{thm:cyclic}} \label{sec:proofcyclic}
Consider the special case when $\C O = \C O'$ first, so that $\Xi(\C O') = \C B(\C O') = \C U(\C O')$ is the (irreducible) unipotent representation.

By Lemma \ref{lem:lem1}, $\Gamma(\C O')$ is the image of the intertwining operator $I_1$. Then the intertwining operator 
$$I_2: X(w_1\la^0) \rightarrow X(-\la^0)$$
in Equation (6.72) of \cite{B4} maps $I_2(\Gamma(\C O')) = \ovl{X}(\la^0) = \C U(\C O')$ onto the irreducible subquotient.
Using the arguments of Equations (6.78) -- (6.80) of \cite{B4}, $I_2$ is injective on the level of diminutive $K_c'-$types, and hence
\begin{equation} \label{eq:surject3}
I_2|_{\Gamma(\C O')}: \Gamma(\C O') \xrightarrow{dm} \C U(\C O') = \Xi(\C O')
\end{equation}
as stated in the theorem.

For general $\C O$, apply induction in stages to \eqref{eq:surject3} and get a $(\fk g_c,K_c)-$morphism
\begin{align*}
\beta: I^G&\left( \la[c_{2r_1},c_{2r_1}] \boxtimes  \cdots \boxtimes  
\la[c_{2r_k},c_{2r_k}] \boxtimes \Gamma(\C O')\right)\\ 
\quad \quad  &\xrightarrow{dm}\ I^G\left( \la[c_{2r_1},c_{2r_1}] \boxtimes  \cdots \boxtimes  
\la[c_{2r_k},c_{2r_k}] \boxtimes \C U(\C O')\right) = \C B(\C O)
\end{align*}
whose domain is equal to \eqref{eq:glstrings}. Composing $\beta$ with $\eta$ in Proposition \ref{prop:main}, we have a surjection
%
\begin{equation} \label{eq:surject2}
\beta \circ \eta: \Gamma(\C O) \twoheadrightarrow \Xi(\C O) \quad (\subseteq \C B(\C O))
\end{equation}
since $\Gamma(\C O)$ and $\Xi(\C
O)$ are both cyclic. Now the result follows from the fact that $\eta$
and $\beta$ are injective on the level of diminutive $K_c-$types. \qed

\subsection{Proof of Proposition \ref{prop:main} for Symplectic Groups}
For $G = Sp(2n,\mathbb{C})$, one has to prove that
there is a map $\eta$ injective on the level of diminutive $K_c-$types:
\begin{align*}
\Gamma(\C O) :=\ &I^G\left(\la[c_0,c_1] \boxtimes \cdots \boxtimes \la[c_{2p},c_{2p+1}]\right) \\ 
\stackrel{\eta}{\longrightarrow}\ &I^G\left( \la[c_{2r_1},c_{2r_1}] \boxtimes  \cdots \boxtimes  
\la[c_{2r_k},c_{2r_k}] \boxtimes 
\la[c_0',c_1'] \boxtimes  \cdots \boxtimes  \la[c_{2q}',c_{2q+1}']\right)
\end{align*}
In fact, we will prove the above statement for $G = GL(n,\mathbb{C})$, so that the case
of $G = Sp(2n,\mathbb{C})$ follows from parabolic induction.

\medskip
We proceed by induction on the number of columns of $\C O$.  The result is automatic if $\C O = (c_0 \geq c_1)$. Now suppose $\C O = (c_0 \geq c_1 \geq c_2 \geq c_3)$. There are two cases:

\smallskip
\noindent (i) If $c_1 > c_2$, i.e. $\C O = \C O'$ with $\tau(\C O) = \phi$ is empty, then there is nothing to prove.

\noindent (ii) If $c_1 = c_2$, i.e. $\C O' = (c_0 \geq c_3)$ with $\tau(\C O) = \{1\}$, 
Consider the intertwining operator
$$I^{GL}\left(\la[c_0,c_3] \boxtimes  \la[c_2,c_2]\right) \stackrel{\iota}{\longrightarrow}
I^{GL}\left(\la[c_2,c_2] \boxtimes  \la[c_0,c_3]\right).$$
By Proposition \ref{p:glgl}(b), its image is equal to 
\begin{equation} \label{eq:incl2}
I^{GL}\left(\la[c_0,c_1] \boxtimes  \la[c_2,c_3]\right) \subseteq I^{GL}\left(\la[c_2,c_2] \boxtimes  \la[c_0,c_3]\right).
\end{equation}
So the proposition follows in this setting.

Suppose the result holds for $\C O_p = (c_0 \geq c_1  \geq \dots \geq c_{2p} \geq c_{2p+1})$, i.e.
\begin{equation} \label{eq:trueone}
\begin{aligned}
&I^{GL}\left(\la[c_0,c_1] \boxtimes  \cdots \boxtimes  \la[c_{2p},c_{2p+1}]\right) \\
\stackrel{\eta_p}{\longrightarrow} &\ I^{GL}\left(\la[c_{2r_1},c_{2r_1}] \boxtimes  \cdots \boxtimes  
\la[c_{2r_k},c_{2r_k}] \boxtimes 
\la[c_0',c_1'] \boxtimes  \cdots \boxtimes  \la[c_{2q}',c_{2q+1}']\right),
		\end{aligned}
\end{equation}
keeping in mind that $c_0' = c_0$ and $c_{2q+1}' = c_{2p+1}$.
Consider $\C O_{p+1} = (c_0 \geq c_1 \geq \dots  \geq c_{2p+2} \geq c_{2p+3})$.
By induction in stages, we have
\begin{align*}
&I^{GL}\left( 
\la[c_0,c_1] \boxtimes  \cdots \boxtimes  \la[c_{2p},c_{2p+1}] \boxtimes  \la[c_{2p+2},c_{2p+3}]
 \right)\\
\stackrel{\eta_p^+}{\rightarrow}
\ &I^{GL}\left(\la[c_{2r_1},c_{2r_1}] \boxtimes  \cdots \boxtimes  
\la[c_{2r_k},c_{2r_k}] \boxtimes 
\la[c_0',c_1'] \boxtimes  \cdots \boxtimes  \la[c_{2q}',c_{2q+1}'] \boxtimes  \la[c_{2p+2},c_{2p+3}] \right)\\
=\ &I^{GL}\left(\la[c_{2r_1},c_{2r_1}] \boxtimes  \cdots \boxtimes  
\la[c_{2r_k},c_{2r_k}] \boxtimes 
\la[c_0',c_1'] \boxtimes  \cdots \boxtimes  \la[c_{2q}',c_{2p+1}] \boxtimes  \la[c_{2p+2},c_{2p+3}]  \right),
\end{align*}
where the $\eta_p^+$ is obtained from \eqref{eq:trueone} and induction in stages. There are two cases:

\smallskip
\noindent (i) If $c_{2p+1} > c_{2p+2}$, then $p+1 \notin \tau(\C O)$ and we are done. 

\noindent (ii) If $c_{2p+1} = c_{2p+2}$, then $r_{k+1} = p+1 \in \tau(\C O)$, and one has
\begin{align*}
 \ &I^{GL}\left( \la[c_{2r_1},c_{2r_1}] \boxtimes  \cdots \boxtimes  
\la[c_{2r_k},c_{2r_k}] \boxtimes 
\la[c_0',c_1'] \boxtimes  \cdots \boxtimes  \la[c_{2q}',c_{2p+2}] \boxtimes  \la[c_{2p+2},c_{2p+3}]  \right)	\\
\subseteq \ &I^{GL}\left(\la[c_{2r_1},c_{2r_1}] \boxtimes  \cdots \boxtimes  
\la[c_{2r_k},c_{2r_k}] \boxtimes 
\la[c_0',c_1'] \boxtimes  \cdots \boxtimes  \la[c_{2p+2},c_{2p+2}] \boxtimes  \la[c_{2q}',c_{2p+3}]  \right) \\
\xrightarrow{\cong} \ &I^{GL}\left( \la[c_{2r_1},c_{2r_1}] \boxtimes  \cdots \boxtimes  
\la[c_{2r_k},c_{2r_k}] \boxtimes  \la[c_{2p+2},c_{2p+2}] \boxtimes  
\la[c_0',c_1'] \boxtimes  \cdots \boxtimes  \la[c_{2q}',c_{2p+3}] \right)\\
= \ &I^{GL}\left(\la[c_{2r_1},c_{2r_1}] \boxtimes  \cdots \boxtimes  
\la[c_{2r_k},c_{2r_k}] \boxtimes  \la[c_{2r_{k+1}},c_{2r_{k+1}}] \boxtimes  
\la[c_0',c_1'] \boxtimes  \cdots \boxtimes  \la[c_{2q}',c_{2p+3}]  \right),
\end{align*}
where the $\subseteq$ follows from \eqref{eq:incl2} and induction in stages. To see why the above $\overset{\cong}{\rightarrow}$ holds, note that $c_{2p+1} = c_{2p+2} \leq c_l'$ for all $l = 1, \dots, 2q$. So the strings $\la[c_{2j}',c_{2j-1}']$, $\la[c_{2p+2},c_{2p+2}]$
satisfy Condition (ii) of nestedness for all $j = 1, \dots, q$. Therefore, the proposition follows for $G = Sp(2n,\mathbb{C})$.

\subsection{Proof of Proposition \ref{prop:main} for Orthogonal Groups} Now study the case of $G = O(2n+\delta,\mathbb{C})$ for $\C O = (c_1 \geq c_2 \geq \dots \geq c_{2p+1})$. There are two cases:

\medskip
\noindent (i) If $c_1 \neq c_2$, i.e. $1 \neq r_1 \in \tau(\C O)$ and $c_1 = c_1'$. Then from above, there is a $(\fk g_c, K_c)-$morphism injective on the level of diminutive $K_c-$types:
\begin{align*}
I^{GL}&\left(\la[c_2,c_3] \boxtimes \dots \boxtimes \la[c_{2p},c_{2p+1}]\right) \\
&\stackrel{\eta_{GL}}{\longrightarrow}\ I^{GL}\left(\la[c_{2r_1},c_{2r_1}] \boxtimes  \cdots \boxtimes  
\la[c_{2r_k},c_{2r_k}] \boxtimes  \la[c_2',c_3'] \boxtimes  \cdots \boxtimes  \la[c_{2q}',c_{2q+1}']\right)
\end{align*}
and hence by induction in stages we have
\begin{align*}
\Gamma(\C O) =\ &I^{G}\left(\la[c_2,c_3] \boxtimes \dots \boxtimes \la[c_{2p},c_{2p+1}] \boxtimes  \mathrm{triv}_{O(c_1)}\right) \\ 
\stackrel{\eta_O}{\longrightarrow}
\ &I^{G}\left(\la[c_{2r_1},c_{2r_1}] \boxtimes  \cdots \boxtimes  
\la[c_{2r_k},c_{2r_k}] \boxtimes  \la[c_2',c_3'] \boxtimes  \cdots \boxtimes  \la[c_{2q}',c_{2q+1}'] \boxtimes  \mathrm{triv}_{O(c_1')}\right)
\end{align*}
to the induced module \eqref{eq:glstrings}, injective for all diminutive $K_c-$types.

\medskip
\noindent (ii) If $c_1 = c_2$, i.e. $1 = r_1 \in \tau(\C O)$. Then we have
\begin{align*}
&\Gamma(\C O) =\ I^{G}\left(\la[c_2,c_3] \boxtimes \dots \boxtimes \la[c_{2q},c_{2q+1}] \boxtimes  \mathrm{triv}_{O(c_1)}\right) \\ 
\stackrel{\eta_O}{\rightarrow}
\ &I^G\left(\la[c_{2r_2},c_{2r_2}] \boxtimes  \cdots \boxtimes  
\la[c_{2r_k},c_{2r_k}] \boxtimes  \la[c_2,c_1'] \boxtimes  \la[c_2',c_3'] \boxtimes \cdots \boxtimes  \la[c_{2q}',c_{2q+1}'] \boxtimes  \mathrm{triv}_{O(c_1)}\right) \\
=
\ &I^G\left(\la[c_{2r_2},c_{2r_2}] \boxtimes  \cdots \boxtimes  
\la[c_{2r_k},c_{2r_k}] \boxtimes  \la[c_2,c_1'] \boxtimes  \la[c_2',c_3'] \boxtimes \cdots \boxtimes  \la[c_{2q}',c_{2q+1}'] \boxtimes  \mathrm{triv}_{O(c_2)}\right) \\
\overset{\cong}{\rightarrow}
\ &I^G\left(\la[c_{2r_2},c_{2r_2}] \boxtimes  \cdots \boxtimes  
\la[c_{2r_k},c_{2r_k}] \boxtimes  \la[c_2',c_3'] \boxtimes  \cdots \boxtimes  \la[c_{2q}',c_{2q+1}'] \boxtimes  \la[c_2,c_1'] \boxtimes  \mathrm{triv}_{O(c_2)}\right) \\
\stackrel{(*)}{\rightarrow}
\ &I^G\left(\la[c_{2r_2},c_{2r_2}] \boxtimes  \cdots \boxtimes  
\la[c_{2r_k},c_{2r_k}] \boxtimes  \la[c_2',c_3'] \boxtimes  \cdots \boxtimes  \la[c_{2q}',c_{2q+1}'] \boxtimes  \la[c_2,c_2] \boxtimes  \mathrm{triv}_{O(c_1')}\right) \\
\overset{\cong}{\rightarrow}
\ &I^G\left( \la[c_2,c_2] \boxtimes \la[c_{2r_2},c_{2r_2}] \boxtimes  \cdots \boxtimes  
\la[c_{2r_k},c_{2r_k}] \boxtimes  \la[c_2',c_3'] \boxtimes  \cdots \boxtimes  \la[c_{2q}',c_{2q+1}'] \boxtimes  \mathrm{triv}_{O(c_1')}\right) \\
=
\ &I^G\left( \la[c_{2r_1},c_{2r_1}] \boxtimes \la[c_{2r_2},c_{2r_2}] \boxtimes  \cdots \boxtimes  
\la[c_{2r_k},c_{2r_k}] \boxtimes  \la[c_2',c_3'] \boxtimes  \cdots \boxtimes  \la[c_{2q}',c_{2q+1}'] \boxtimes  \mathrm{triv}_{O(c_1')}\right),
\end{align*}
where the $\overset{\cong}{\rightarrow}$'s above hold since the strings involved satisfy Condition (ii) of nestedness. On the other hand, 
the map $(*)$ above is defined as follows: For $a \geq b$, consider 
\begin{equation} \label{eq:orth}
I^{G'}\left(\la[a,b] \boxtimes  \mathrm{triv}_{O(a)}\right) \hookrightarrow
I^{G'}\left(\la[a,b] \boxtimes  \la[-\delta,a]\right) 
\stackrel{\iota}{\rightarrow}
I^{G'}\left( \la[-\delta,a] \boxtimes  \la[a,b]\right).
\end{equation}
We {\it claim} that the image of the map \eqref{eq:orth} is in $I^{G'}\left( \la[a,a] \boxtimes  \mathrm{triv}_{O(b)}\right)$. Indeed, by Proposition \ref{p:glgl}(b),
the image of $\iota$ is equal to $I^{G'}\left( \la[a,a] \boxtimes  \la[-\delta,b]\right)$. 
By restricting $\iota$ to the cyclic submodule $I^{G'}\left(\la[a,b] \boxtimes  \mathrm{triv}_{O(a)}\right)$, its image $\iota\left(I^{G'}\left(\la[a,b] \boxtimes  \mathrm{triv}_{O(a)}\right)\right) \subseteq I^{G'}\left( \la[a,a] \boxtimes  \la[-\delta,b]\right)$ must also be cyclic. Since
$$I^{G'}\left( \la[a,a] \boxtimes  \mathrm{triv}_{O(b)}\right) \subseteq I^{G'}\left( \la[a,a] \boxtimes  \la[-\delta,b]\right)$$
is a spherical submodule, one has
$$\iota\left(I^{G'}\left(\la[a,b] \boxtimes  \mathrm{triv}_{O(a)}\right)\right) \subseteq I^{G'}\left( \la[a,a] \boxtimes  \mathrm{triv}_{O(b)}\right) \subseteq I^{G'}\left( \la[a,a] \boxtimes  \la[-\delta,b]\right).$$ 

 So we are left to show \eqref{eq:orth} is an isomorphism on the level of diminutive $K_c'-$types.
This can be done by carefully tracing the intertwining operator
$\iota$ using the calculations in \cite[Section 6]{B4}. Details are given in Appendix A. 

\begin{example}
We continue with our example $\C O = (8,{\bf 6,6},4,2,0)$ in $\mathfrak{sp}(26,\mathbb{C})$ with $\C O' = (8,4,2,0)$. The strings 
$\la[6,6] = (-2\ldots3)$ and $\la[8,4] = (-1\dots4)$ are {\bf not} nested. So the intertwining operator
\begin{align*} I^G\left(\la[8,4] \boxtimes \la[6,6]\right) = &I^G\left((-1\ldots4)\boxtimes(-2\dots3)\right) \\
\stackrel{\iota}{\rightarrow} &I^G\left((-2\ldots3)\boxtimes(-1\dots4)\right) = I^G\left(\la[6,6] \boxtimes \la[8,4]\right)\end{align*}
has image
$$I^G\left((-2\ldots4)\boxtimes(-1\dots3)\right) = I^G\left(\la[8,6] \boxtimes \la[6,4]\right) \subseteq I^G\left(\la[6,6] \boxtimes \la[8,4]\right).$$
by Proposition \ref{p:glgl}(b). Using induction in stages, there is an inclusion 
$$\Gamma(\C O) := I^G\left(\la[8,6] \boxtimes \la[6,4] \boxtimes \la[2,0] \right) \subseteq I^G\left(\la[6,6] \boxtimes \la[8,4] \boxtimes \la[2,0] \right),$$
where $\Gamma(\C O)$ is cyclic. By the discussions in Section \ref{sec:proofcyclic}, we have:
$$I^G\left(\la[6,6] \boxtimes \la[8,4] \boxtimes \la[2,0] \right) \overset{dm}{\rightarrow} I^G\left(\la[6,6] \boxtimes \ovl{X}(\la[8,4],\la[2,0]) \right) = 
I^G\left(\la[6,6] \boxtimes \mathcal{U}(\C O') \right) =: \C B(\C O)$$
Restricting the above map to the cyclic submodule $\Gamma(\C O)$, one has a map between the cyclic modules
$\Gamma(\C O) \overset{dm}{\rightarrow}  \Xi(\C O)$ $(\subseteq \C B(\C O))$ as stated in Theorem \ref{thm:cyclic}.
\end{example}

\section{Proof of the Main Theorem}
We are now in the position to state and prove the main theorem of this
manuscript:
\begin{theorem} \label{thm:main}
Let $\C O$ be a classical nilpotent orbit. Then Equation \eqref{eq:CE} gives an isomorphism
onto the cyclic submodule $\Xi(\C O)$ of $\C B(\C O)$. 
\end{theorem}
\begin{proof}
Let $\C O$ be a classical nilpotent orbit as in Definition \ref{def:roprime}. By Remark \ref{rmk:cyclic}, 
the composition factors of the cyclic module $\Xi(\C O)$
must appear in $\C B(\ovl{\C O})$ as well. 
By Theorem \ref{thm:cyclic}, the multiplicities of diminutive $K_c-$types $V_{\beta}$ of 
$\Xi(\C O)$ is given by
\begin{equation} \label{eq:contradict1}
\begin{aligned}
\left[ \Xi(\C O): V_{\beta} \right] = & \left[ \Gamma(\C O): V_{\beta} \right]\\
= & \begin{cases} \left[I^G\left(\la[c_0,c_1] \boxtimes  \cdots \boxtimes  \la[c_{2p},c_{2p+1}]\right): V_{\beta} \right] & \text{if}\ G = Sp(2n,\mathbb{C}),\\ 
\left[I^G\left(\la[c_2,c_3] \boxtimes  \cdots \boxtimes  \la[c_{2p},c_{2p+1}] \boxtimes  \mathrm{triv}_{O(c_1)}\right): V_{\beta} \right] & \text{if}\ G = O(2n+\delta,\mathbb{C}) \end{cases} \\
= & \begin{cases} \left[\mathrm{Ind}_{GL(\frac{c_0+c_1}{2}) \times 
\dots \times GL(\frac{c_{2p}+c_{2p+1}}{2})}^G(\mathrm{triv}): V_{\beta} \right] & \text{if}\ G = Sp(2n,\mathbb{C}),\\ 
\left[\mathrm{Ind}_{GL(\frac{c_0+c_1}{2}) \times 
\dots \times GL(\frac{c_{2p}+c_{2p+1}}{2}) \times O(c_1)}^G(\mathrm{triv}): V_{\beta} \right] & \text{if}\ G = O(2n+\delta,\mathbb{C})
\end{cases}
\end{aligned}
\end{equation}

On the other hand, we have an inclusion of nilpotent varieties $\overline{\C O} \subseteq \overline{\C O^{\#}}$, where
\begin{equation} \label{eq:osharp}
\C O^{\#} := \begin{cases} \left(\frac{c_0+c_1}{2} \geq \frac{c_0+c_1}{2} \geq \dots \geq \frac{c_{2p}+c_{2p+1}}{2} \geq \frac{c_{2p}+c_{2p+1}}{2}\right) & \text{if}\ G = Sp(2n,\mathbb{C}),\\
\left(c_1 \geq \frac{c_2+c_3}{2} \geq \frac{c_2+c_3}{2} \geq \dots \geq \frac{c_{2p}+c_{2p+1}}{2} \geq \frac{c_{2p}+c_{2p+1}}{2}\right) &  \text{if}\ G = O(2n+\delta,\mathbb{C})
\end{cases}
\end{equation}
Note that $\overline{\C O^{\#}}$ is normal and is a Richardson orbit induced from 
$$L = \begin{cases} GL(\frac{c_0+c_1}{2}) \times 
\dots \times GL(\frac{c_{2p}+c_{2p+1}}{2}) & \text{if}\ G = Sp(2n,\mathbb{C}),\\
 GL(\frac{c_2+c_3}{2}) \times 
\dots \times GL(\frac{c_{2p}+c_{2p+1}}{2}) \times O(c_1) & \text{if}\ G = O(2n+\delta,\mathbb{C}) \end{cases}$$ 
Hence the multiplicities of all $K_c-$types of $R(\ovl{\C O})$ is {\bf bounded above} by
\begin{equation} \label{eq:contradict2}
\left[ R(\ovl{\C O}): V_{\beta} \right] \leq 
\left[R(\overline{\C O^{\#}}) : V_{\beta} \right],
\end{equation}
where
\begin{equation} \label{eq:contradict3}
\begin{aligned}
\left[R(\overline{\C O^{\#}}) : V_{\beta} \right] &= \left[R(\C O^{\#}) : V_{\beta} \right]  \\
&= \begin{cases} \left[\mathrm{Ind}_{GL(\frac{c_0+c_1}{2}) \times 
\dots \times GL(\frac{c_{2p}+c_{2p+1}}{2})}^G(\mathrm{triv}): V_{\beta} \right] & \text{if}\ G = Sp(2n,\mathbb{C}),\\
\left[\mathrm{Ind}_{GL(\frac{c_2+c_3}{2}) \times 
\dots \times GL(\frac{c_{2p}+c_{2p+1}}{2}) \times O(c_1)}^G(\mathrm{triv}): V_{\beta} \right] & \text{if}\ G = O(2n+\delta,\mathbb{C}) \end{cases}
\end{aligned}
\end{equation}

Suppose on the contrary that $\C B(\ovl{\C O})$ is not isomorphic to $\Xi(\C O)$. Then it must
have at least one extra composition factor other than those appearing $\Xi(\C O)$. Let $\pi$ be one of such
factor; its lowest $K_c-$type, $V_{\gamma}$, must be diminutive by Proposition \ref{prop:parastructure}.
So one has
\begin{equation} \label{eq:contradict}
[R(\ovl{\C O}):V_{\gamma}] = [\C B(\ovl{\C O}):V_{\gamma}] 
\geq [\pi \oplus \Xi(\C O) : V_{\gamma}] 
> [\Xi(\C O) : V_{\gamma}] 
\stackrel{\eqref{eq:contradict1}, \eqref{eq:contradict3}}{=} [R(\ovl{\C O^{\#}}) : V_{\gamma}],
\end{equation}
contradicting Equation \eqref{eq:contradict2}. Therefore, $\C B(\ovl{\C O})$ must have the same
composition factors as $\Xi(\C O)$. 
\end{proof}

\begin{corollary} \label{cor:discrepancy}
Let $\C O$ be a classical nilpotent orbit. The diminutive $K_c-$type multiplicity of $R(\ovl{\C O})$ is equal to that of 
$R(\ovl{\C O^{\#}})$, where $\ovl{\C O^{\#}} \supset \ovl{\C O}$ is given in Equation \eqref{eq:osharp}.
Moreover, $\ovl{\C O}$ is not normal iff $R(\C O)$ and $R(\ovl{\C O})$
have different multiplicities on the level of diminutive $K_c-$types.
\end{corollary}
\begin{proof}
The statement on diminutive $K_c-$type multiplicities are precisely Theorem \ref{thm:main}.
For the second statement, let $\C  O = (c_0 \geq c_1 \geq \dots \geq c_{2p} \geq c_{2p+1})$
for $G = Sp(2n,\mathbb{C})$, and $\C  O = (c_1 \geq c_2 \geq \dots \geq c_{2p} \geq c_{2p+1})$ for $G = O(2n+ \delta,\mathbb{C})$.
By Theorem \ref{thm:main} and \cite{B5}, the diminutive $K_c-$type
multiplicities of $R(\ovl{\C O})$ and $R(\C O)$ are given by the induced modules
\begin{equation} \label{eq:indro}
\mathrm{Ind}_{\prod_{j= 0}^p GL(\frac{c_{2j}+c_{2j+1}}{2})}^G(\mathrm{triv})\ \ \text{and}\ \ 
\mathrm{Ind}_{\prod_{i \in \mathcal{R}} GL(c_{2i}) \times \prod_{j= 0}^q GL(\frac{c_{2j}'+c_{2j+1}'}{2})}^G(\mathrm{triv})
\end{equation}
for $G = Sp(2n,\mathbb{C})$, and
\begin{equation} \label{eq:indro2}
\mathrm{Ind}_{\prod_{j= 1}^p GL(\frac{c_{2j}+c_{2j+1}}{2}) \times O(c_1)}^G(\mathrm{triv})\ \ \text{and}\ \ 
\mathrm{Ind}_{\prod_{i \in \mathcal{R}} GL(c_{2i}) \times \prod_{j= 1}^q GL(\frac{c_{2j}'+c_{2j+1}'}{2}) \times O(c_1')}^G(\mathrm{triv})
\end{equation}
for $G = O(2n+ \delta,\mathbb{C})$ respectively. The two induced modules have the same multiplicities iff
they are induced from the same Levi type, which occurs precisely when
$\C O$ is of the form given in Theorem \ref{thm:KP}. 
\end{proof}

\begin{example}
We study the orbit $\mathcal{O} = (6,4,4,2,2,0)$ in $\mathfrak{sp}(18,\mathbb{C})$. Then
the induced modules in Equation \eqref{eq:indro} are
\begin{equation}
\mathrm{Ind}_{GL(5) \times 
GL(3) \times GL(1)}^G(\mathrm{triv})\ \ \text{and}\ \ 
\mathrm{Ind}_{GL(4) \times GL(2) \times GL(3)}^G(\mathrm{triv}).
\end{equation}
By Frobenius reciprocity, the discrepancy of $K_c-$type multiplicities between $R(\ovl{\C O})$ and $R(\C O)$ occurs as early as $V_{(1,1,1,1,0,0,0,0,0)}$. 

By the Appendix, the seven possible composition factors of $\C B(\ovl{\C O})$ and $\C B(\C O)$ are
\begin{align*}
&\pi_1 := \mathrm{Ind}_{GL(2) \times GL(4) \times Sp(6)}^G(\det \boxtimes \det \boxtimes\ \mathrm{triv}),\quad \pi_2^{\pm} := \mathrm{Ind}_{GL(4) \times Sp(10)}^G(\det \boxtimes\ \mathcal{U}(6,4; \pm)), \\
&\pi_3^{\pm} := \mathrm{Ind}_{GL(2) \times Sp(14)}^G(\det \boxtimes\ \mathcal{U}(6,6,2;\pm)),\quad \pi_4^{\pm} := \mathcal{U}(6,6,3,3; \pm).
\end{align*}
where $\mathcal{U}(\mathcal{P};\epsilon)$ are the special unipotent representations attached to
the (special) orbit $\C P$. All these factors are unitary.

By studying closely the expression of $\C B(\C O)$ in Definition \ref{def:deform}, one can check that
all the seven factors appears in the composition series of $\C B(\C O)$ with multiplicity one.
As for $\C B(\ovl{\C O}) \cong \Xi(\C O)$, one can obtain the composition series of $\Xi(\C O)$ by studying that
of $\Gamma(\C O) = I^G(\la[6,4], \la[4,2], \la[2,0])$ in Equation \eqref{eq:gammao}. In our subsequent work, we show that the composition factors
of $\C B(\ovl{\C O})$ are precisely $\pi_1$, $\pi_2^-$, $\pi_3^+$ and $\pi_4^+$. 
\end{example}


\appendix
\section{Image of an Intertwining Operator}
Let $G = O(2n+\delta,\mathbb{C})$. In this section, we will prove that the map 
\begin{equation}  \label{eq:inciota}
\iota|_{I^{G}\left(\la[a,b] \boxtimes  \mathrm{triv}_{O(a)}\right)} : I^{G}\left(\la[a,b] \boxtimes  \mathrm{triv}_{O(a)}\right) \longrightarrow I^{G}\left( \la[a,a] \boxtimes  \mathrm{triv}_{O(b)}\right).
\end{equation}
defined in \eqref{eq:orth} is injective on the level of diminutive $K_c-$types, where $\iota$ is the intertwining operator 
\begin{equation*}
I^{G}\left(\la[a,b] \boxtimes \la[-\delta,a] \right) \stackrel{\iota}{\longrightarrow} I^{G}\left( \la[-\delta,a] \boxtimes  \la[a,b]\right).
\end{equation*}
Let $p:= \frac{1}{2}(a+b)$, $q := \frac{1}{2}(a-\delta)$, $M = GL(p,\mathbb{C}) \times GL(q,\mathbb{C})$ so that $n=p+q$, and $w \in W(M) = S_p \times S_q$ be the involution swapping the first $p$ coordinates with the last $q$ coordinates. Then the above intertwining operator can be rewritten in the notations of \cite{B4} as
$$\iota(w): X_M((\nu_1)(\nu_2)) \longrightarrow X_M((\nu_2)(\nu_1))$$
with $\nu_1 = a-b+1$ and $\nu_2 = \frac{a+\delta-2}{2}$. 

\medskip
In \cite{B4}, the signatures of $\iota(w)$ for the isotypic components of all diminutive (or more generally {\it relevant}) $K_c-$types are explicitly computed. More precisely, let $V_i := \wedge^{2i}\mathbb{C}^{2n'+\delta}$ be a diminutive $K_c-$type. Consider
$$V_i \quad \longleftrightarrow\quad \Sigma_i := (n-i) \times (i),$$
where $\Sigma_i$ is the irreducible $W(G)-$module obtained by the zero weight space of $V_i^*$.
Then the image of $\iota(w)$ on the $V_i-$isotypic component of $X_M((\nu_1)(\nu_2))$ can be obtained by studying the operator
\begin{equation} \label{eq:iw}
R_{i}(w): (\Sigma_i)^{W(M)} \longrightarrow (\Sigma_i)^{W(w\cdot M)}.
\end{equation}
given in Equation (5.3) of \cite{B4}.

To check whether \eqref{eq:inciota} is injective for diminutive $K_c-$types, consider the inclusion $(\Sigma_i)^{W(GL(p) \times O(2q+\delta))} \subseteq (\Sigma_i)^{W(M)}$, and check whether the restriction of \eqref{eq:iw} to $(\Sigma_i)^{W(GL(p) \times O(2q+\delta))}$
is an injection for all $i$.

We now compute $(\Sigma_i)^{W(GL(p) \times O(2q+\delta))}$ explicitly. Let 
$$\{e_1,\ \ldots,\ e_n,\ f_1,\ \ldots,\ f_n,\ \delta\}$$
be an isotropic basis of $(\mathbb{C}^{2n+\delta})^*$. Then the $\Sigma_i$ is spanned by
$$\{\theta_{k_1} \wedge \ldots \wedge \theta_{k_i} |\ 1 \leq k_1 < \ldots < k_i \leq n \},$$
where $\theta_1 := e_1 \wedge f_1,\ \ldots,\ \theta_n := e_n \wedge f_n.$ It is easy to see that 
$(\Sigma_i)^{W(GL(p) \times O(2q+\delta))}$ is one dimensional for $1 \leq i \leq p$, spanned by the single vector
$${\bf v}_i := \sum_{1 \leq k_1 < \dots < k_i \leq p} \theta_{k_1} \wedge \dots \wedge \theta_{k_i}$$
So one needs to check $R_{i}(w)({\bf v}_i) \neq 0$.

\medskip
Recall the calculation of $R_i(w)$ around Equations (6.12) -- (6.14) of \cite{B4}: Consider the restriction of the $W(G)-$module
$$(n-i) \times (i)|_{S_n} = (n) \oplus (n-1,1) \oplus \dots \oplus (n-i,i)$$
into $W(GL(n,\mathbb{C})) = S_n-$modules. Since $\Sigma_i \cong (n-i) \times (i)$ as $W(G)-$modules, one has the decomposition
\begin{equation} \label{eq:decompose}
(\Sigma_i)^{W(M)} = \left(\Sigma_{i,(n)}\right)^{W(M)} \oplus \dots \oplus  \left(\Sigma_{i,(n-i,i)}\right)^{W(M)}
\end{equation}
where $\Sigma_{i,(n-k,k)}$ is the $(n-k,k)-$isotypic component of the restricted module $\Sigma_i$ to $S_n$.
For all ${\bf x}_i \in (\Sigma_i)^{W(M)}$, write
\begin{equation} \label{eq:decompose2}
{\bf x}_i = {\bf x}_{i,0} + \dots + {\bf x}_{i,i}, \quad \quad {\bf x}_{i,k} \in (\Sigma_{i,(n-k,k)})^{W(M)}
\end{equation}
under the decomposition \eqref{eq:decompose}, then
$$R_{i}(w)({\bf x}_i) = \alpha_0{\bf x}_{i,0} + \dots + \alpha_i{\bf x}_{i,i},$$
where $\alpha_k := r_{\sigma(n-k,k)}(p,q,\nu_1,\nu_2) \in \mathbb{R}$ is given in Equation (6.15) of \cite{B4}. In particular, $\alpha_0 = 1$.

\medskip
Now we are ready to compute $r_{i}(w)({\bf v}_i)$. Under the decomposition \eqref{eq:decompose2} ${\bf v}_i = {\bf v}_{i,0} + \dots + {\bf v}_{i,i}$ of ${\bf v}_i$, 
$${\bf v}_{i,0} = \frac{1}{n!}\sum_{w \in S_n} \sum_{1 \leq k_1 < \dots < k_i \leq p} w\cdot (\theta_{k_1} \wedge \dots \wedge \theta_{k_i})
= \frac{i!(n-i)!}{n!}\sum_{1 \leq \ell_1 < \dots < \ell_i \leq n} \theta_{\ell_1} \wedge \dots \wedge \theta_{\ell_i} \neq {\bf 0}.$$
Therefore, for all $1 \leq i \leq p$,
$$r_{i}(w)({\bf v}_i) = \alpha_0{\bf v}_{i,0} + \dots + \alpha_i{\bf v}_{i,i} = {\bf v}_{i,0} + \dots + \alpha_i{\bf v}_{i,i} \neq {\bf 0}$$
since the vectors $\{{\bf v}_{i,k} |\ 0 \leq k \leq i\}$ are linearly independent, and consequently \eqref{eq:inciota} is an injection for all diminutive $K_c-$types.

\section{Composition Factors of $\C B(\C O)$}
In this section, we list the possible Langlands 
parameters that {\it can} appear in the composition series of $\C B(\C O)$ and $\mathcal{B}(\ovl{\C O})$
for even orbits of Type $C$. The results can be generalized to all other classical orbits.

\smallskip
For any orbit $\C O = (c_0 \geq c_1 \geq \dots \geq c_{2p} \geq c_{2p+1})$, the infinitesimal character attached to $\C B(\C O)$ and $\mathcal{B}(\ovl{\C O})$ is
\begin{equation}
  \label{eq:infl}
  c_{2i}\longrightarrow (\frac{c_{2i}}{2},\dots ,1);\ \ 
  c_{2i+1}\longrightarrow (\frac{c_{2i+1}}{2}-1,\dots ,1,0).
\end{equation}
This is implicit from the constructions in Section \ref{sec:indprinciple}. Moreover, it also matches the dual pair correspondence in, 
for example, \cite{Pz}.

Let $\la$ be as in Equation \eqref{eq:infl}. 
There is a unique maximal primitive ideal $I(\la)\subset U(\fk g)$
with a given infinitesimal character $\la.$ This
determines a nilpotent orbit $\CO_\la$ such that any admissible
irreducible $(\fk g_c, K_c)-$module will have at least 
$\ovl{\CO_\la}$ as its associated variety. In
particular, the spherical irreducible module with infinitesimal
character $\la$  has associated variety precisely the closure of $\CO_\la$
(see \cite{BV2} for details). 
The composition factors of $\C B(\C O)$ and $\mathcal{B}(\ovl{\C O})$ 
are irreducible $(\fg_c, K_c)-$modules with infinitesimal 
character $\lambda$ and associated varieties
contained in  $\overline{\C O}$ and containing 
$\overline{\C O_{\lambda}}$.

\subsection{Definition of $Norm(\C O)$}
In this section, we describe all nilpotent orbits $\C P$ lying between
$\overline{\C O_{\lambda}}$ and $\overline{\C O}$.
\begin{definition} \label{def:funddegeneration}
Let $(b_0 \geq b_1 = b_2 \geq b_3)$ be four positive integers of the same parity. 
A {\bf fundamental degeneration} is defined by:

$\bullet\ (b_0>b_1=b_2>b_3) \rightarrow (b_0, b_1+2, b_2-2, b_3)$.

$\bullet\ (b_0=b_1=b_2>b_3)  \rightarrow (b_0+1, b_1+1, b_2-2, b_3)$.

$\bullet\ (b_0>b_1=b_2=b_3)  \rightarrow (b_0, b_1+2, b_2-1, b_3-1)$.

$\bullet\ (b_0=b_1=b_2=b_3)  \rightarrow (b_0+1, b_1+1, b_2-1, b_3-1)$.
\end{definition}
In the first two cases, we omit the columns $b_2-2,b_3$ 
if both terms are equal to zero. Note that when $b_0 > b_1$, 
the size of $b_0$ remains unchanged after degeneration. Similarly, 
if $b_2 > b_3$, the size of $b_3$ is the same after degeneration.
\begin{definition} 
Let $\C O = (c_0 \geq c_1 \geq \dots \geq c_{2p} \geq c_{2p+1})$ be an even orbit. We construct a collection of orbits as follows:

\medskip
\noindent (1)\ For each $\dots c_{2i} \geq c_{2i+1} = c_{2i+2} = \dots = c_{2j-1} = c_{2j} \geq c_{2j+1} \dots$ 
appearing in $\C O$, perform fundamental degeneration on the columns 
$c_{2i} \geq c_{2i+1} = c_{2j} \geq c_{2j+1}$
and get a new orbit:
$$(c_0 \geq \dots \geq c_{2i}'\geq c_{2i+1}' \geq c_{2i+2} = \dots = c_{2j-1}\geq c_{2j}'\geq c_{2j+1}'\geq \dots\geq c_{2p+1}).$$
\noindent (2)\ For each new orbit obtained in Step (1), repeat Step (1) on them until
  there are no more $c_{2j+1} = c_{2j+2}$'s.
		
\medskip
Denote the collection of all such orbits by $Norm(\C O)$. They are precisely the orbits 
between $\overline{\C O_{\lambda}}$ and $\overline{\C O}$.
\end{definition}

\begin{example}
Let $\C O = (8\geq 6\geq 6\geq 4\geq 4\geq 2\geq 2\geq 0)$. Then the $\C P \in Norm(\C O)$ are given by
\begin{equation} \label{eq:8664422}
  \begin{matrix}
    & &(8 6  6 4  4 2  2  0)&\\
		&&&\\
&(8 8 4 4  4 2  2  0)&(8 6  6  6 2 2  2  0)&(8 6  6 4  4  4)\\
&&&\\
&(8 8 5  5 2 2  2  0)&(8  8 4 4  4  4)&(8 6  6  6 3  3)\\
&&&\\
&         &(8  8 5  5 3  3)
  \end{matrix}.
	\end{equation}
\end{example}

The following proposition gives an explicit construction of 
the irreducible modules $\mathcal{M}(\C P, \ep)$ with infinitesimal character given by \eqref{eq:infl} 
and associated variety $AV(\mathcal{M}(\C P, \ep)) = 1\cdot \ovl{\C P}$ for all ${\C P} \in Norm(\C O)$. 

\begin{proposition} \label{prop:parastructure}
Let $\C O$ be an even orbit. For each ${\C P} = (d_1 \geq \dots \geq d_{2s+1}) \in Norm(\C O)$,
let 
$$\C \tau_0(\C P) := \{i |\ d_{2i-1} = d_{2i}\ \text{is even}\}$$
and $\C P^*$ is obtained from $\C P$ by removing the columns $d_{2i-1} = d_{2i}$ for $i \in \C \tau_0(\C P)$.
Consider the induced modules 
\begin{equation}\label{eq:parameter}
\mathcal{M}(\C P, \ep) := \mathrm{Ind}_{\prod_{i \in \C \tau_0(\C P)} GL(d_{2i}) \times G^*}^G\left( \det \boxtimes \dots \boxtimes \det \boxtimes\ \mathcal{U}(\C P^*;\epsilon)\right),
\end{equation}
where each $\epsilon \in \overline{A}(\C P^*)^{\vee}$ is an irreducible representation of
the {\it Lusztig's quotient group} $\overline{A}(\C P^*)$ of $\C P^*$, and $\mathcal{U}(\C P^*;\epsilon)$ are the special unipotent representations attached to the special orbit $\C P^*$.

Then all $\mathcal{M}(\C P, \ep)$ are irreducible having
associated variety $\overline{{\C P}}$. Moreover, this exhausts all
irreducible representations with infinitesimal character given by Equation \eqref{eq:infl}
and associated variety $\overline{{\C P}}$.
\end{proposition}
We will prove the proposition in the next subsection.

\medskip
Since the modules $\C B(\C O)$ and $\C B(\ovl{\C O})$
constructed in Section \ref{sec:indprinciple} have infinitesimal character
given by Equation \eqref{eq:infl} and associated variety $\ovl{\C O}$,
the last paragraph of the above Proposition says all composition 
factors of $\C B(\C O)$ and $\C B(\ovl{\C O})$ must be of the form
given by Equation \eqref{eq:parameter}. 

\smallskip
By construction, all the representations in Equation \eqref{eq:parameter} are unitary. Furthermore, the lowest $K_c^*-$types of
the unipotent representations $\mathcal{U}(\C P^*;\epsilon)$ are diminutive, 
so the induced modules in Equation \eqref{eq:parameter} also have diminutive lowest $K_c-$types. 
This observation is essential in the proof of Theorem \ref{thm:main}.

\begin{example}
Let $\C O = (8\geq 6\geq 6\geq 4\geq 4\geq 2\geq 2\geq 0)$ as above. Then the induced modules in Equation \eqref{eq:parameter} are:
\small{
\[
  \begin{matrix}
    & &I_{GL(6,4,2) \times G_8^*}^G(\det^{\boxtimes 3} \boxtimes\ \mathcal{U}(80))&\\
		&&&\\
&I_{GL(4,2) \times G^*_{20}}^G(\det^{\boxtimes 2} \boxtimes\ \mathcal{U}(8840;\pm))&I_{GL(6,2) \times G^*_{16}}^G(\det^{\boxtimes 2} \boxtimes\ \mathcal{U}(8620;\pm))&I_{GL(6,4) \times G^*_{12}}^G(\det^{\boxtimes 2} \boxtimes\ \mathcal{U}(84;\pm))\\
&&&\\
&I_{GL(2)\times G^*_{28}}^G(\det \boxtimes\ \mathcal{U}(885520;\pm))&I_{GL(4) \times G^*_{24}}^G(\det \boxtimes\ \mathcal{U}(8844;\pm,\pm))&I_{GL(6) \times G^*_{20}}^G(\det \boxtimes\ \mathcal{U}(8633;\pm))\\
&&&\\
&         &\mathcal{U}(885533;\pm)
  \end{matrix}.
\]}
where $GL(n_1, \dots, n_k) := GL(n_1) \times \dots \times GL(n_k)$ and $G^*_m = Sp(m,\mathbb{C})$. In particular, there are
17 irreducible representations in total, all of which are unitary.	
\end{example}

\subsection{Proof of Proposition \ref{prop:parastructure}} Since $\C O$ is even,
the infinitesimal character $\lambda$ in Equation \eqref{eq:infl} is integral. 

Given the integral infinitesimal character $(\lambda,\lambda)$ in 
Equation \eqref{eq:infl}, we study the {\bf left cone representation} 
corresponding to the orbit $\C O_{\lambda}$ as described in
the beginning of Section 4:
$$\ovl{V}^L(w_0w_{\lambda}) = \mathrm{Ind}_{W(\lambda)}^W(\mathrm{triv}),$$
where $W(\lambda)$ is the largest Weyl Levi subgroup of $W$ fixing $\lambda$,
and $w_{\lambda}$ is the longest element in $W(\lambda)$.  

For any special orbit $\C P$, the number of irreducible representations 
with infinitesimal character $(\lambda,\lambda)$ and associated variety $\ovl{\C P}$ is given by 
$$[\ovl{V}^L(w_0w_{\lambda}): V(\C P)] = [\mathrm{Ind}_{W(\lambda)}^W(\mathrm{triv}):V(\C P)],$$
where $V(\C P)$ is the {\bf left cell representation} corresponding to the orbit $\C P$.
In particular, each $V(\C P)$ contains $|\ovl{A}(\C P)|$ distinct irreducible representations
of $W$ (see \cite{BV2}, Proposition 5.28 for instance).

The decomposition of $\ovl{V}^L(w_0w_{\lambda}) = \mathrm{Ind}_{W(\lambda)}^W(\mathrm{triv})$ into irreducible
representations can be easily computed using the Robinson-Schensted algorithm. In particular, 
one can check that for any even $\C O$ and any $\C P \in Norm(\C O)$, each irreducible 
factor of $V(\C P)$ appears in $\mathrm{Ind}_{W(\lambda)}^W(\mathrm{triv})$ exactly once.
\begin{example}
Consider $\C O = (6\geq4\geq4\geq2\geq2\geq0)$ with $\lambda = (322111100)$. Then 
$$Norm(\C O) = \{ \C O, \C P_1 := (6\geq4\geq4\geq4), \C P_2 := (6\geq 6\geq 2\geq 2\geq 2\geq 0), \C O_{\lambda} = (6\geq 6\geq 3\geq 3)\}$$
and $W(\lambda) = W(A_0) \times W(A_1) \times W(A_3) \times W(C_2)$. 

The left cell representation for each $\C P \in Norm(\C O)$ is given by the following:
\begin{itemize}
\item $\C O$: $(21 \times 321)$.
\item $\C P_1$: $(22 \times 32)$, $(43 \times 11)$.
\item $\C P_2$: $(31 \times 311)$, $(41 \times 211)$.
\item $\C O_{\lambda}$: $(32 \times 31)$, $(42 \times 21)$.
\end{itemize}
Note that the Young diagrams are all described in terms of columns, i.e. \newline
$(43 \times 11)$ = \begin{tabular}{cc|c|c|cccc}
\cline{3-4} \cline{6-7} 
\multirow{4}{*}{} &  &  &  & \multicolumn{1}{c|}{} & \multicolumn{1}{c|}{} & \multicolumn{1}{c|}{} & \tabularnewline
\cline{3-4} \cline{6-7} 
 & \multirow{2}{*}{$($} &  &  & \multirow{2}{*}{,} &  &  & \multirow{2}{*}{$)$}\tabularnewline
\cline{3-4} 
 &  &  &  &  &  &  & \tabularnewline
\cline{3-4} 
 &  &  & \multicolumn{1}{c}{} &  &  &  & \tabularnewline
\cline{3-3} 
\end{tabular}.

One can easily show that each irreducible representation above shows up exactly once in
$\mathrm{Ind}_{W(\lambda)}^W(\mathrm{triv})$. In other words, there
is only one way to fill up the Young diagrams into semi-standard Young tableaux using the 
coordinates of $\lambda$ with the first row of the left Young diagram filled with zeros. 
For example, the only way to fill up $(43 \times 11)$ is given by \begin{tabular}{cc|c|c|cccc}
\cline{3-4} \cline{6-7} 
\multirow{4}{*}{} &  & $0$ & $0$ & \multicolumn{1}{c|}{} & \multicolumn{1}{c|}{$1$} & \multicolumn{1}{c|}{$1$} & \tabularnewline
\cline{3-4} \cline{6-7} 
 & \multirow{2}{*}{$($} & $1$ & $1$ & \multirow{2}{*}{,} &  &  & \multirow{2}{*}{$)$}\tabularnewline
\cline{3-4} 
 &  & $2$ & $2$ &  &  &  & \tabularnewline
\cline{3-4} 
 &  & $3$ & \multicolumn{1}{c}{} &  &  &  & \tabularnewline
\cline{3-3} 
\end{tabular}.

For an arbitrary orbit $\C O$ with even columns, there is a systematic way of describing 
the left cell representations for all $\C P \in Norm(\C O)$ (c.f. \cite[Chapter 4]{Lu})
and the same results hold. We omit the details here.
\end{example}

Consequently, the number of irreducible modules with infinitesimal character $(\lambda,\lambda)$ and
associated variety $\ovl{\C P}$ for each $\C P \in Norm(\C O)$ is equal to $|\ovl{A}(\C P)|$.
By our definition of fundamental degeneration and $Norm(\C O)$,
one can check that all $\C P \in Norm(\C O)$ are special orbits. Moreover,
the number of irreducible representations of the form $\mathcal{M}(\C P, \ep)$ in Proposition \ref{prop:parastructure}
is equal to $|\ovl{A}(\C P^*)|$ which is equal to $|\ovl{A}(\C P)|$ by the construction of $\C P^*$ from $\C P$. 
So we are left to show that the induced module $\mathcal{M}(\C P, \ep)$
in Equation \eqref{eq:parameter} is irreducible.

\smallskip
Let $\ovl{X}(\C P, \ep)$ be the irreducible subquotient of $\mathcal{M}(\C P, \ep)$ having the same lowest $K_c-$type. 
Then the lowest $K_c-$type of both modules are diminutive, and hence all
diminutive $K_c-$types of $\ovl{X}(\C P, \ep)$ are \emph{bottom-layer} in the sense
of \cite{SV}. Therefore, one can check, by \cite[Proposition 2.7]{B1} for instance, that
\begin{equation} \label{eq:last}
\mathcal{M}(\C P, \epsilon) \approx  \ovl{X}(\C P, \ep),
\end{equation}
i.e. they have the same diminutive $K_c-$type multiplicities.
In order to prove Proposition \ref{prop:parastructure}, one needs to check the $\approx$ in \eqref{eq:last}
is indeed an equality. 

\smallskip
We proceed by induction on the closure ordering of the orbits in $Norm(\C O)$: 
Consider the smallest orbit $\C O_{\lambda} \in Norm(\C O)$, 
then the $\mathcal{M}(\C O_{\lambda}, \epsilon)$ are precisely
the special unipotent representations $\mathcal{U}(\C O_{\lambda};\epsilon)$, which is irreducible.

\smallskip
Now consider any nilpotent orbit $\C P \in Norm(\C O)$. 
By induction hypothesis, assume that for all $\C P' \subsetneq \ovl{\C P}$, all the induced modules 
$\mathcal{M}(\C P', \epsilon') = \ovl{X}(\C P',\ep')$ are irreducible. 

Suppose on the contrary that $\Phi$ is a composition factor of $\mathcal{M}(\C P, \epsilon)$
other than $\ovl{X}(\C P, \ep)$, 
then $\Phi$ must have associated variety $\overline{\C P'} \subsetneq \overline{\C P}$ since 
$AV(\mathcal{M}(\C P, \epsilon)) = 1\cdot \ovl{\C P}$. Hence $\Phi = \ovl{X}(\C P', \epsilon')
= \mathcal{M}(\C P', \epsilon')$ for some $\epsilon'$, which has 
diminutive lowest $K_c-$type $V_{\omega}$. 
So we have
\begin{align*}
[\mathcal{M}(\C P, \epsilon): V_{\omega}] \geq \ [\ovl{X}(\C P, \ep):V_{\omega}] + [\Phi:V_{\omega}] > \ [\ovl{X}(\C P, \ep):V_{\omega}] \stackrel{\eqref{eq:last}}{=} &\ [\mathcal{M}(\C P, \epsilon):V_{\omega}],
\end{align*}
which gives a contradiction. So $\mathcal{M}(\C P, \epsilon) = \ovl{X}(\C P, \ep)$, and the result follows. \qed

\subsection{General Orbits} For general nilpotent orbits $\C O$ of Type $C$, the infinitesimal character $\la$ in \eqref{eq:infl} is not integral, and is formed of integers and
half-integers. The Kazhdan-Lusztig conjectures reduce this case to the
one considered in the previous section for an \textit{endoscopic
  group}. Let  
$$
\Delta(\la):=\{\al\in R\ : (\check\al,\al)\in \bZ\}.
$$
This root system is again of classical type. 
Then $\check\Delta(\la)=\{\check\al\ : \al\in\Delta(\la)\}$ forms a
root system. As in \cite{V6}, for instance, the character theory of $(\fk
g,K)-$modules at infinitesimal character $(\la,\la)$ can be deduced via
translation functors from a corresponding $(\la_{reg},\la_{reg})$ with the same
``\textit{integral roots}''. The category of representations with this
infinitesimal character decomposes into a sum of ``\textit{blocks}''
each with a \textit{coherent continuation action} of the Weyl group
$W(\la)$ generated by the root reflections coming from $\Delta(\la).$ 
The Kazhdan-Lusztig conjectures for non-integral infinitesimal
character state that the character theory of each block is equivalent
to the character theory of an \textit{endoscopic group} $G(\la)$ with roots
$\delta(\la).$ The induced module in Proposition
\ref{prop:parastructure} corresponding to an even nilpotent orbit. We
omit further details.

\ifx\undefined\bysame
\newcommand{\bysame}{\leavevmode\hbox to3em{\hrulefill}\,}
\fi

\end{document}